\documentclass{article}[1100]
\usepackage{amssymb}
\usepackage{amsmath}
\usepackage{amsthm}
\usepackage{graphicx}
\usepackage[cal=cm]{mathalfa}
\usepackage{colonequals}
\usepackage[
backend=bibtex,
style=numeric,
sorting=nyt
]{biblatex}
\newtheorem{theorem}{Theorem}[section]
\newtheorem{lemma}[theorem]{Lemma}
\newtheorem{proposition}[theorem]{Proposition}

\theoremstyle{definition}

\theoremstyle{remark}
\newtheorem{remark}[theorem]{Remark}

\usepackage{tcolorbox}
\newcommand\eps{\varepsilon}
\newcommand\sgn{\text{sgn}}
\newcommand\innerproduct[2]{\langle #1,#2\rangle}
\newcommand\Ren[1]{\mathbb{R}^{#1}}
\newcommand\ar{\mathbb{R}}
\newcommand\norma[1]{|\!|#1|\!|}

\newcommand\inverseRadonPart[2]{\left(\frac{1}{{#1}}\frac{d}{d{#1}}\right)^{#2}}

\newcommand\testFunctionSpace{C_e^\infty(\sphere{n-1})}
\newcommand\distributionSpace{D_e(\sphere{n-1})}

\newcommand\mySpaceH[1]{\textsc{H}(#1)}
\newcommand\mySpace[2]{\textsc{H}_{#1}(#2)}
\newcommand\sphere[1]{S^{#1}}

\newcommand\directSum[1]{\ar\oplus_{\scriptscriptstyle E}\mkern-3mu#1}
\newcommand{\littleE}{\scriptscriptstyle E}
\newcommand{\CT}{\mathcal T}
\newcommand{\inverseCT}{{\mathcal T}^{-1}}
\newcommand\smallcirc{\scriptstyle\circ}
\title{On rotationally-symmetric norms}
\author{Yossi Lonke}

\begin{document}
\maketitle





\begin{abstract}
For two-dimensional norms that generate rotationally-symmetric norms in all higher dimensions, necessary and
sufficient conditions are established for them to be subspaces of $L_1$. 
\end{abstract}

\maketitle

\section*{Introduction}
The aim of the present paper
is to characterize rotationally-symmetric norms in $\Ren{n}$ for which the corresponding normed space is an $L_1$-subspace, that is,  isomorphically  isometric to a subspace of $L_1(0,1)$. 
A study of such norms naturally arises from the work of Dor (\cite{Dor}). See Section 1 for a detalied discussion.

\medskip
We say that a function $f\colon\Ren{n}\to\ar$ is {$v$-rotationally symmetric} for some unit vector $v$ in $\Ren{n}$,  if
 for every orthogonal transformation $U$ that keeps $v$ fixed,  $f\circ U=f$. A function is \emph{rotationally symmetric}
 if it is $v$-rotationally symmetric for some unit vector~$v$.
 
 \medskip
Let $\norma{\cdot}$ be an $e_1$-rotationally symmetric norm in $\Ren{n+1}$. If $E$ is the two dimensional
subspace  spanned by $e_1$ and $e_2$, then rotational symmetry implies that $\{e_1 ,e_2\}$ is a \emph{sign-symmetric basis} for $E$.  This means that for every choice of  scalars $a_1$ and~$a_2$, the vectors $-a_1e_1+a_2e_2$ and
$a_1e_1+a_2e_2$ have the same norm. Conversely, with every  two-dimensional, real normed space $E$  that has a sign-symmetric basis $\{f_1,f_2\}$, we associate a rotationally-symmetric norm in $\Ren{n+1}$ by
\begin{equation}\label{normDefinition}
\norma{(t,a_1,\dots,a_n)}=\norma{(\sum_{i=1}^na_i^2)^{1/2}f_1+tf_2}_E.
\end{equation}
An infinite dimensional analogue of (\ref{normDefinition}) arises if instead of $(a_i)_{i=1}^n$ we take a sequence
$(a_n)_{n=1}^{\infty}$ in $\ell_2$. 

\medskip

The function $\varphi(t)=\norma{f_1+tf_2}_{\scriptstyle{E}}$ with $t\in\ar$ plays a central role
in our characterization.  The main result of Section~$2$ is that if $\varphi''(t)$ is a continuous function, then  $\Ren{n+1}$
equipped with the norm (\ref{normDefinition})  is an $L_1$-subspace for every~$n\geq 2$ if and only if $\varphi''(\sqrt{t})$ is completely monotone in $[0,\infty)$. This result encapsulates  infinitely many derivatives of $\varphi$, and as will become clear, it is
 essentially a statement about the infinite-dimensional analogue of (\ref{normDefinition}). Moreover, it suggests that  derivatives of a fixed order, e,g. $\frac{d^k}{dt^k}(\varphi''(\sqrt{t}))$,
are related to the finite-dimensional case. This is indeed the case. In Section~$3$ we prove that for $n\geq 2$,  if a rotationally-symmetric norm in $\Ren{2n}$
produces an $L_1$-subspace, then $(-1)^{n-1}\frac{d^{n-1}}{dt^{n-1}}(\varphi''(\sqrt{t}))$ is a positive measure on $(0,\infty)$. In particular, the function $\varphi$ must have at least $n-1$ continuous derivatives.

 \medskip

Section~$4$ is devoted to an important example, that generates an infinite dimensional $L_1$-subspace of the form 
$\mathbb{G}_1\oplus \ar$, where $\mathbb{G}_1$ is a Gaussian Hilbert space embedded in $L_1$. A two dimensional calculation yields
an explicit expression for the dual norm.  Previously, I had conjectured that the dual space of $\mathbb{G}_1\oplus \ar$, is also an $L_1$-subspace. (\cite{KoldobskyPrivate}, \cite{SchneiderPrivate}).
A Contemporaneous  result of  \cite{RyaZvav}  confirms the conjecture. This provides a negative answer to a question of Grothendieck and a problem of Schneider.  
\medskip

\medskip

All vector spaces in this work are over the field of real numbers.

\section{basic properties of rotationally symmetric norms}

The norm  in (\ref{normDefinition}) is defined in terms of the standard coordinates in $\Ren{n+1}$, but it depends on a choice of
the basis $\{f_1,f_2\}$ for~$E$. The space $\ell_1^2$, for example,  has two different sign-symmetric bases: $\{(1,0),(0,1)\}$ and also $\{(1,1),(-1,1)\}$. If $n\geq 2$, then
with these two bases, the equation  (\ref{normDefinition}) produces two non isometric norms in $\Ren{n+1}$:\
\[ (t,a_1,\dots,a_n)\to (\sum_{i=1}^na_i^2)^{1/2}+|t|,\ \ \mbox{and} \ \ (t,a_1,\dots,a_n)\to 2\max\{(\sum_{i=1}^na_i^2)^{1/2}, |t|\}. \]

Therefore, if $E$ is a two-dimensional normed space with a sign-symmetric basis~$\beta=\{f_1,f_2\}$, then we denote by  $\mySpace{n+1}{E,\beta}$  
 the space $\Ren{n+1}$ equipped with the norm (\ref{normDefinition}), and by   $\mySpaceH{E,\beta}$ the space $\ar\oplus\ell_2$ equipped with  the 
 infinite-dimensional analogue of the norm (\ref{normDefinition}).
 
 \medskip

We begin with a simple proof of  a monotonicity property of sign-symmetric bases. It is essential for verifying
the triangle inequality of the norm defined by~(\ref{normDefinition}), and will be used several times in the following sections.

\begin{tcolorbox}
\begin{lemma}\label{easyLemma}

If $(E,\norma{\cdot})$ is a two-dimensional normed space with a sign-symmetric basis  $\{f_1,f_2\}$, then
\[{\rm (i)}\quad \hbox{For every $y$, if $|s|\leq |t|$ then $\norma{sf_1+yf_2}\leq \norma{tf_1+yf_2}$}, \]
and
\[{\rm (ii)}\quad \hbox{For every $x$, if $|s|\leq |t|$ then $\norma{xf_1+sf_2}\leq \norma{xf_1+tf_2}$}.\]

\end{lemma}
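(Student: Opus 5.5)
The plan is a convexity argument, using only the triangle inequality and the sign-symmetry of the basis. The definition of a sign-symmetric basis says $\norma{-a_1f_1+a_2f_2}=\norma{a_1f_1+a_2f_2}$ for all scalars $a_1,a_2$. Since $\norma{-v}=\norma{v}$, replacing $(a_1,a_2)$ by $(-a_1,-a_2)$ also gives $\norma{a_1f_1-a_2f_2}=\norma{a_1f_1+a_2f_2}$. Thus each coordinate can be flipped in sign separately, and statements (i) and (ii) become formally symmetric. I will prove (i); (ii) follows by exchanging the roles of the two coordinates.

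Fix $y$ and $s,t$ with $|s|\le|t|$. If $t=0$ then $s=0$ and there is nothing to prove, so assume $t\neq 0$. Write $s=\lambda t$ with $\lambda=s/t\in[-1,1]$, and set $\theta=\frac{1+\lambda}{2}\in[0,1]$. Then
\[ sf_1+yf_2=\theta\,(tf_1+yf_2)+(1-\theta)\,(-tf_1+yf_2), \]
because the $f_1$-coefficient is $(2\theta-1)t=\lambda t=s$ and the $f_2$-coefficient is $\theta y+(1-\theta)y=y$. The triangle inequality and homogeneity then give
\[ \norma{sf_1+yf_2}\le\theta\norma{tf_1+yf_2}+(1-\theta)\norma{-tf_1+yf_2}. \]
By sign-symmetry $\norma{-tf_1+yf_2}=\norma{tf_1+yf_2}$, so the right-hand side equals $\norma{tf_1+yf_2}$, which proves (i).

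For (ii), fix $x$ and repeat the argument with the convex combination $\theta(xf_1+tf_2)+(1-\theta)(xf_1-tf_2)$. Here we use $\norma{xf_1-tf_2}=\norma{xf_1+tf_2}$, which is the second-coordinate sign flip obtained in the first paragraph.

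I do not expect a real obstacle. The only point needing care is the second-coordinate sign flip in (ii), and it follows from the stated definition combined with $\norma{-v}=\norma{v}$. An alternative view is that the map $s\mapsto\norma{sf_1+yf_2}$ is convex and even, and a convex even function on $\ar$ is nondecreasing in $|s|$; this is exactly what the convex-combination computation shows.
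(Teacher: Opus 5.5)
Your proof is correct and is essentially the paper's argument. Both rest only on convexity of the norm together with sign-symmetry: the paper first shows $\norma{yf_2}\le\norma{xf_1+yf_2}$ and then interpolates along the segment from $yf_2$ to $tf_1+yf_2$, while you compress this into a single convex combination of $\pm tf_1+yf_2$. Your explicit derivation of the second-coordinate sign flip from $\norma{-v}=\norma{v}$ also fills in the step the paper leaves as ``similar'' for (ii).
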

\end{tcolorbox}
\begin{proof}
We prove (i). The proof of (ii) is similar.

\medskip

Put $\nu(x,y)=\norma{xf_1+yf_2}$.
Fix $y$, and let $s,t$ be such that $|s|\leq |t|$. Convexity of the norm combined with sign-symmetry implies, for every~$x$,
\begin{equation}\label{fconvex}
\nu(0,y)\leq\frac{1}{2}(\nu(-x,y)+\nu(x,y))=\nu(x,y).
\end{equation}
Pick a point $p=(t,y)$, with $t>0$. Sign-symmetry allows us to assume that $0\leq s<t$. The point $p$ belongs
to the convex set $B=\{q\in\Ren{2}: \norma{q}\leq\norma{p}\}$, and by (\ref{fconvex}), so does $(0,y)$. Hence the line segment joining the point $p$ to $(0,y)$ also belongs to the set $B$, so for every $0\leq\lambda\leq 1$, the point
$(\lambda t,y)$ belongs to $B$ as well. In particular, (take $\lambda=s/t$),
the point $(s,y)$ belongs to $B$. Hence,
\[ \nu(s,y)\leq \nu(p)=\nu(t,y), \]
and (i) is proved.
\end{proof}


If $X$ is a Banach space isomorphic to a Hilbert space, then  by $d(X)$ we denote the Banach-Mazur distance between $X$ and the
Hilbert space of its dimension. Our proof of the next proposition relies on the fact that the distance-ellipse of a two-dimensional
normed space is unique, up to homothety. An unpublished theorem by Maurey, mentioned in  (\cite{AFJS}, Remark 1.2), states that if a finite dimensional normed space does not have a unique distance ellipsoid, then it has a proper subspace whose distance to a Hilbert space is the same as that of the whole space. As a result, the distance ellipse of every two-dimensional normed space is unique, up to homothety.  A  proof of this uniqueness property has recently appeared in (\cite{GrundKobos}, Corollary $2.11$).

\begin{tcolorbox}
\begin{proposition}\label{distanceProposition} Let $E$ be a two dimensional normed space  with a  sign-symmetric basis 
${\beta=\{f_1,f_2\}}$.
Put
\[ \textsc{H}\colonequals\mySpaceH{E,\beta} \ \  \mbox{and} \ \  \textsc{H}_{n+1}\colonequals\mySpace{n+1}{E,\beta}.\] 
The space $\textsc{H}$ is isomorphic to $\ell_2$, and for every $n\geq 1$,
\begin{equation}\label{equalDistances}
d(\textsc{H}_{n+1})= d(\textsc{H})=d(E).
\end{equation}
\end{proposition}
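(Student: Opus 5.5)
The plan is to prove the two inequalities $d(E)\le d(\textsc{H}_{n+1})\le d(\textsc{H})$ and $d(\textsc{H})\le d(E)$; equality follows. Isomorphism of $\textsc{H}$ with $\ell_2$ will come out of the second inequality, since $d(E)<\infty$. For the first inequality, note that $E$ is isometric to the subspace of $\textsc{H}_{n+1}$ spanned by $e_1,e_2$, via $(t,a_1,0,\dots,0)\mapsto \norma{|a_1|f_1+tf_2}_E=\norma{a_1f_1+tf_2}_E$ (sign-symmetry). Likewise $\textsc{H}_{n+1}$ is isometric to a subspace of $\textsc{H}$ (sequences supported on the first $n$ coordinates). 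Banach--Mazur distance to Hilbert space is monotone under passing to subspaces: restrict the ellipsoid/Euclidean norm. Hence $d(E)\le d(\textsc{H}_{n+1})\le d(\textsc{H})$.

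For the reverse inequality $d(\textsc{H})\le d(E)$, I will use the uniqueness of the distance ellipse quoted before the proposition. Let $\mathcal{E}$ be a distance ellipse for $E$, i.e.\ a Euclidean norm $|\cdot|$ on $E$ with $|x|\le\norma{x}\le d(E)|x|$. The reflection $R\colon x_1f_1+x_2f_2\mapsto -x_1f_1+x_2f_2$ is an isometry of $E$ by sign-symmetry. So $|R\cdot|$ is also a distance ellipse with the same constants, and by uniqueness up to homothety it equals $c|\cdot|$; since $R^2=I$, $c=1$. Thus $|\cdot|$ is $R$-invariant, which forces $f_1\perp f_2$ in it:
\[|x_1f_1+x_2f_2|^2=\alpha x_1^2+\gamma x_2^2 .\]
Now define on $\ar\oplus\ell_2$ the Hilbert norm $|(t,a)|^2=\alpha\|a\|_2^2+\gamma t^2$. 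For any $(t,a)$, its $\textsc{H}$-norm is $\norma{\|a\|_2f_1+tf_2}_E$, and its Hilbert norm is exactly $|\|a\|_2f_1+tf_2|$. So the two-sided estimate transfers verbatim, giving $d(\textsc{H})\le d(E)$. In particular $\textsc{H}$ is isomorphic to $\ell_2$; it is complete because the norms are equivalent to a complete Hilbert norm.

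The main obstacle is justifying that the distance ellipse can be chosen diagonal in the basis $\{f_1,f_2\}$. I will rely on the uniqueness result cited (Maurey / \cite{GrundKobos}). One subtlety is that uniqueness is only up to homothety, so the scaling constant $c$ must be ruled out. The argument $R^2=I$ gives $c^2=1$, hence $c=1$ since $c>0$. Alternatively, one could average $|\cdot|^2$ over $\{I,R\}$. But averaging a distance ellipse need not preserve optimality without convexity considerations of $d$. So I will use uniqueness as the primary route.

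The definition of $d$ for infinite-dimensional $\textsc{H}$ (distance to $\ell_2$) is handled the same way, since any isomorphism is bounded below by the finite-dimensional subspace bound.
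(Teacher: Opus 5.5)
Your proof is correct and follows the paper's argument: the isometric inclusions $E\subset\textsc{H}_{n+1}\subset\textsc{H}$ give one inequality, and invariance of the (unique up to homothety) distance ellipse under the sign change $R$ makes it diagonal in $\{f_1,f_2\}$, after which the two-sided estimate transfers verbatim to $\ar\oplus\ell_2$. One aside: your worry about averaging is unfounded, because the conditions $Q(x)\le\norma{x}^2\le d(E)^2Q(x)$ are linear in the quadratic form $Q$, so $\frac12\big(|x|^2+|Rx|^2\big)$ is again a distance ellipse and the uniqueness theorem of Maurey / Grundbacher--Kobos could be bypassed altogether.
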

\end{tcolorbox}
\begin{proof}
One direction of inequalities follows from the inclusions
\[ E\cong\textsc{H}_2\subset\textsc{H}_{n+1}\subset\textsc{H},\] 
valid for all $n\geq 1$. Hence
\[ d(E)\leq d(\textsc{H}_{n+1})\leq d(\textsc{H}).\]
To complete the proof, it remains to show that $\textsc{H}$ is isomorphic to $\ell_2$ and that ${d(\textsc{H})\leq d(E)}$.

\medskip

If $B_{\scriptscriptstyle E}$ is the unit ball of $E$ and $B_2$ the unit ball of $\ell_2^2$, then there exists a linear isomorphism ${L:\Ren{2}\to E}$  such that
\begin{equation}\label{distanceEllipsoidInclusion}
B_{\scriptscriptstyle E}\subset L(B_2)\subset d(E)B_{\scriptscriptstyle E},
\end{equation}
where $L(B_2)$ is the distance ellipse of $E$. Put $u_1=L^{-1}f_1$ and $u_2=L^{-1}f_2$. A vector $x_1f_1+x_2f_2$ belongs to $L(B_2)$ if and only if $x_1u_1+x_2u_2\in B_2$, that is, if and only if in $\Ren{2}$ the point $(x_1,x_2)$
belongs to some linear image of the ball $B_2$. Therefore, there exist real numbers $a,b,c$ such that $a$ and $c$ are nonzero and
\[ x_1f_1+x_2f_2\in\hbox{boundary of  $L(B_2)$}\enspace\hbox{if and only if}\enspace a^2x_1^2+b\,x_1x_2+c^2x_2^2=1. \]

By the uniqueness of the distance ellipse, $L(B_2)$  is invariant under the  sign-symmetries of~$E$.  Hence $b=0$.  
Thus, by (\ref{distanceEllipsoidInclusion}), for every choice of real scalars $x_1,x_2$,
\begin{equation}\label{distance}
(a^2x_1^2+c^2x_2^2)^{1/2}\leq \norma{x_1 f_1+x_2 f_2}\leq d(E) (a^2x_1^2+c^2x_2^2)^{1/2}.
\end{equation}

Define a map $T:\textsc{H}\to\ell_2$ by
\[ T(t,\eta_1,\eta_2,\dots)=(c t,a \eta_1,a \eta_2,\dots).\]
$T$ is clearly an isomorphism onto, and by (\ref{distance}),
\[
\begin{aligned}
\norma{T(t,\eta_1,\eta_2,\dots)}_{\ell_2}&=(c^{2}t^2+a^{2}\sum_{i=1}^{\infty}\eta_i^2)^{1/2}\leq  \norma{(\sum_{i=1}^{\infty}\eta_i^2)^{1/2}f_1+tf_2}\\
&=\norma{(t,\eta_1,\eta_2,\dots)}_{\textsc{H}}. \\
\end{aligned}
\]
Hence, $\norma{T}\leq 1$. The inverse $T^{-1}$ maps a vector $\{x_k\}_{k=1}^{\infty}\in\ell_2$ to the vector
\[(\frac{x_1}{c},\frac{x_2}{a},\frac{x_3}{a},\dots), \]
whose norm in $\textsc{H}$, according to (\ref{distance}), satisfies:
\[\norma{\frac{1}{a}\bigg(\sum_{k=2}^{\infty}x_k^2\bigg)^{1/2}f_1+\frac{x_1}{c}f_2}\leq d(E)\bigg(\sum_{k=1}^{\infty}x_k^2\bigg)^{1/2},\]
whence $\norma{T^{-1}}\leq d(E)$.
Therefore $\textsc{H}$ is  isomorphic to $\ell_2$, and  $d(\textsc{H})\leq d(E)$. 

\end{proof}

The spaces $\mySpaceH{E,\beta}$ are all \emph{isomorphic} to a Hilbert Space, but not \emph{isometric} to a Hilbert Space unless $E$ itself is. The next proposition verifies that the dual space
of $\mySpaceH{E,\beta}$ behaves as expected. The proposition is in fact true in a more general case, but for our purposes it will be convenient
to specialize the proof to the case of a Hilbert space. 
\begin{tcolorbox}
\begin{proposition}\label{dualSpaceProposition}
 Let $E$ be a two dimensional normed space  with a  sign-symmetric basis $\beta=\{f_1,f_2\}$.
 
 \medskip 
 
If $E^*$ is the dual space of $E$, then the algebraic dual basis~$\beta^*=\{f_1^*,f_2^*\}$ is also sign-symmetric 

\medskip

Moreover, for every ${n\geq 1}$
the dual space of $\mySpace{n+1}{E,\beta}$ is isometrically isomorphic to $\mySpace{n+1}{E^*,\beta^*}$ and the dual space of $\mySpaceH{E,\beta}$ is isometrically isomorphic to $\mySpaceH{E^*,\beta^*}$.
\end{proposition}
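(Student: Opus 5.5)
The plan has two parts: first the sign-symmetry of $\beta^*$, then a duality computation that reduces the higher-dimensional spaces to $E$ via Cauchy--Schwarz and Lemma~\ref{easyLemma}.

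\emph{Step 1: $\beta^*$ is sign-symmetric.} Identify $E^*$ with $\Ren{2}$ through $\beta^*$, writing $\xi=y_1f_1^*+y_2f_2^*$. The dual norm is
\[ \norma{\xi}_*=\sup\{y_1x_1+y_2x_2 : \norma{x_1f_1+x_2f_2}\leq 1\}. \]
The substitution $x_1\mapsto -x_1$ preserves the unit ball of $E$, because $\beta$ is sign-symmetric. Hence $\norma{-y_1f_1^*+y_2f_2^*}_*=\norma{y_1f_1^*+y_2f_2^*}_*$, which is the claim.

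\emph{Step 2: the finite-dimensional duality.} Pair $\Ren{n+1}$ with itself by the standard inner product, and write vectors as $(t,a)$ with $a\in\Ren{n}$. The goal is to show that the dual norm of $\mySpace{n+1}{E,\beta}$ at $(s,b)$ equals $\norma{|b|f_1^*+sf_2^*}_*$.

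For the upper bound, take $(t,a)$ with norm at most $1$. Cauchy--Schwarz gives $ts+\innerproduct{a}{b}\leq ts+|a||b|$. The vector $|a|f_1+tf_2$ has $E$-norm at most $1$, so $ts+|a||b|\leq \norma{|b|f_1^*+sf_2^*}_*$.

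For the lower bound, choose $x_1f_1+x_2f_2$ in the unit ball of $E$ that attains the dual norm of $|b|f_1^*+sf_2^*$. By sign-symmetry we may take $x_1\geq 0$; if $|b|>0$ this costs nothing, since flipping the sign of $x_1$ cannot decrease $x_1|b|$. Now set $t=x_2$ and $a=x_1 b/|b|$, or any $a$ with $|a|=x_1$ when $b=0$. Then $\norma{(t,a)}=\norma{x_1f_1+x_2f_2}\leq 1$, and $(t,a)$ attains the required value.

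\emph{Step 3: the remaining verifications.} Two points must be checked.
\begin{itemize}
\item Since $\beta^*$ is sign-symmetric by Step 1, $\norma{|b|f_1^*+sf_2^*}_*$ is exactly the norm of $\mySpace{n+1}{E^*,\beta^*}$ given by (\ref{normDefinition}).
\item The identification of the dual with $\Ren{n+1}$ is isometric, because we have computed the dual norm itself.
\end{itemize}

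\emph{Step 4: the infinite-dimensional case.} By Proposition~\ref{distanceProposition}, $\mySpaceH{E,\beta}$ is isomorphic to $\ell_2$. Therefore every continuous functional on it is represented by some $(s,b)\in\ar\oplus\ell_2$ through the $\ell_2$ pairing, and this gives a linear bijection between the dual and $\ar\oplus\ell_2$. The norm computation of Step 2 then carries over verbatim, with $\Ren{n}$ replaced by $\ell_2$.

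I expect the main subtle point to be this surjectivity. It is exactly what the isomorphism with $\ell_2$ supplies, and it is why the proof is specialized to the Hilbert case. The other thing to handle carefully is the sign reduction $x_1\geq 0$ in the lower bound, which is where sign-symmetry is genuinely used.
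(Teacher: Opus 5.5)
Your proof is correct and has the same skeleton as the paper's. It uses the same pairing $\Lambda_{s,b}(t,a)=st+\innerproduct{b}{a}$ and the same argument for the sign-symmetry of $\beta^*$. The lower bound is also the same: a norming vector of $|b|f_1^*+sf_2^*$ with $a$ aligned to $b$. The paper skips your reduction to $x_1\geq 0$ by using $\norma{a_1f_1+|a_2|f_2}=\norma{a_1f_1+a_2f_2}$ directly, which is the same use of sign-symmetry.

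\textbf{Where you differ: the upper bound.} The paper relies on the transitivity of the isometry group of $\ell_2$: it rotates $b$ to $|b|e_1$ by an isometry $U$ and replaces $|a|$ by $\innerproduct{Ua}{e_1}$ inside the $E$-vector. It then invokes the monotonicity Lemma~\ref{easyLemma} to know that the modified vector still lies in the unit ball of $E$. You instead keep the vector $|a|f_1+tf_2$, which lies in the unit ball of $E$ by the definition of the norm, and put the inequality on the scalar side via Cauchy--Schwarz. So you need neither the rotation nor Lemma~\ref{easyLemma}, and your argument is shorter. It also carries over essentially verbatim to $\directSum{F}$ for any normed $F$. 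There you would use $b(a)\leq\norma{a}_F\norma{b}_{F^*}$ in the upper bound and approximately norming vectors in the lower bound. This is presumably the more general case the paper alludes to; its rotation step is tied to the Hilbert structure.

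\textbf{A small precision in your Step~4.} The bare fact that $\mySpaceH{E,\beta}$ is isomorphic to $\ell_2$ does not by itself identify its functionals with the standard pairing. What you need is that its norm is equivalent to the Hilbert norm on $\ar\oplus\ell_2$ itself. The diagonal map $T$ in the proof of Proposition~\ref{distanceProposition} provides this. Alternatively, it follows directly from
$\max\{|u|/\norma{f_1^*},\,|t|/\norma{f_2^*}\}\leq\norma{(t,u)}\leq|u|\norma{f_1}+|t|\norma{f_2}$.
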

\end{tcolorbox}
\begin{proof}
For every choice of signs $\eps_1=\pm 1,\eps_2=\pm 1$, the map 
\[ T(x_1f_1+x_2f_2)=\eps_1x_1f_1+\eps_2x_2f_2 \]
is a linear isometry of $E$ onto itself, hence for every choice of scalars $a_1,a_2$ and every $\eps_i=\pm 1$
\[
\norma{a_1\eps_1f_1^*+a_2\eps_2f_2^*}=\norma{a_1f_1^*+a_2 f_2^*}.
\]
Thus $\{f_1^*,f_2^*\}$ is a sign-symmetric basis for the dual space $E^*$.
\vskip 6pt
We prove the infinite-dimensional case. The finite-dimensional case has a similar proof. To avoid
cluttered notation, we  temporarily replace the symbol $\mySpaceH{E,\beta}$ by $\mySpaceH{E}$ and $\mySpaceH{E^*,\beta^*}$ by $\mySpaceH{E^*}$.  Elements of the underlying vector-space $\ar\oplus\ell_2$ are denoted by pairs $(t,v)$, and the standard inner product in $\ell_2$ is denoted by $\langle\cdot,\cdot\rangle$.

\medskip
 
  For each element $(s,v)\in\mySpaceH{E^*}$, let $\Lambda_{s,v}$ denote the linear functional defined on $\mySpaceH{E}$ by
\[ \Lambda_{s,v}(t,u)=st+\innerproduct{v}{u}.\]
 Each $\Lambda_{s,v}$ is bounded, hence the map $(s,v)\to\Lambda_{s,v}$ maps
$\mySpaceH{E^*}$ into $(\mySpaceH{E})^*$. It is clearly linear. It is also surjective, because if $e_0$ denotes the vector in $\mySpaceH{E}$ whose first coordinate is $1$ and all the rest are zero, then it is straightforward to verify that every linear functional $\Lambda\in(\mySpaceH{E})^*$ has the form $\Lambda=\Lambda_{s,v}$ for $s=\Lambda(e_0)$ and a uniquely determined vector $v\in\ell_2$.

\medskip

To prove that our map is an isometry, we must  prove that for every $s\in\ar$ and every $v\in\ell_2$,
\begin{equation}\label{isometryToProve}
\norma{\Lambda_{s,v}}=\norma{(s,v)}_{\mySpaceH{E^*}} .
\end{equation}

\medskip

Fix a pair $(s,v)\in\mySpaceH{E}$. Let $e_1$ denote first vector  of the standard basis of~$\ell_2$. For each $v\in\ell_2$,  let $|v|$ denote its norm. 

\medskip

Now we use the fact that the group of isometries of a Hilbert Space acts transitively on the unit sphere.
Pick an isometry $U$ of $\ell_2$ such that $Uv=|v|e_1$. Then,
\[\begin{aligned}
\norma{\Lambda_{s,v}}&=\sup\{|st+\innerproduct{u}{v}|: (t,u)\in\mySpaceH{E},\ \ \norma{t f_1+|u|f_2}=1\}\\
&=\sup\{|st+|v|\innerproduct{Uu}{e_1}|:(t,u)\in\mySpaceH{E},\ \  \norma{t f_1+|u|f_2}=1\}\\
&\leq \sup\{|st+|v|\innerproduct{Uu}{e_1}|:(t,u)\in\mySpaceH{E},\ \  \norma{t f_1+\innerproduct{Uu}{e_1}f_2}\leq 1\}\\
&=\sup\{|st+|v|a|:a,t\in\ar,\  \norma{t f_1+a f_2}\leq 1\}\\
&=\norma{s f_1^*+|v|f_2^*}_{E^*} = \norma{(s,v)}_{\mySpaceH{E^*}}.
\end{aligned}
\]
The inequality on the third line follows from  the inequality $|\innerproduct{Uu}{e_1}|\leq|u|$, valid for every $u\in\ell_2$.  The monotonicity of the basis $\{f_1,f_2\}$, as described in Lemma~\ref{easyLemma}, implies that  the supremum in the third line is taken on a larger set of elements $(t,u)$.
\medskip

So far we have the inequality $\norma{\Lambda_{s,v}}\leq  \norma{(s,v)}_{\mySpaceH{E^*}}$. To conclude the proof of ~(\ref{isometryToProve}), we show that for our already fixed $(s,v)$ there exists a norm-$1$ vector $(t,u)$ in $\mySpaceH{E}$, for which 
$\Lambda_{s,v}(t,u)=\norma{(s,v)}_{\mySpaceH{E^*}}$.

\medskip

Choose an element $a_1f_1 + a_2f_2\in E$ whose norm is~$1$, such that the functional $sf_1^*+|v|f_2^*$ attains at it its norm. That is,
\begin{equation}\label{linearFunctionalAttainsNorm}
sa_1+a_2|v|_2=\norma{sf_1^*+|v|f_2^*}_{E^*}=\norma{(s,v)}_{\mySpaceH{E^*}}. 
\end{equation}
If $v=0$, then $(a_1,0)$ has norm $1$ in $\mySpaceH{E}$, and by (\ref{linearFunctionalAttainsNorm}), $\Lambda_{s,0}(a_1,0)=\norma{(s,0)}_{\mySpaceH{E^*}}$.
If $v\neq 0$, then the element $(a_1,a_2v/|v|)$ has norm $1$ in $\mySpaceH{E}$, because by the sign-symmetry of the basis $\{f_1,f_2\}$,
\[
\norma{(a_1,a_2v/|v|)}_{\mySpaceH{E}}=\norma{a_1f_1+|a_2|f_2}=\norma{a_1f_1+a_2f_2}_E =1.
\]
Moreover, 
\[\Lambda_{s,v}(a_1,a_2v/|v|)=sa_1+a_2|v|=\norma{(s,v)}_{\mySpaceH{E^*}}.\]
This completes the proof of  (\ref{isometryToProve}).  
\end{proof}


\section{When is $\mySpaceH{E}$ an $L_1$-subspace?}
\subsection*{Preliminaries}
In (\cite{Dor}, proposition $1.3$),  Dor proved that if $E$ is a two-dimensional normed space and $\{e_1,e_2\}$ is a basis for $E$, then 
the functions ${\varphi(t)=\norma{e_1-te_2}}$ and ${\psi(t)=\norma{e_2-te_1}}$ determine an integral representation of the norm by the formula
\begin{equation}\label{DorRepresentationTheorem}
\norma{a_1e_1+a_2e_2}=\frac{1}{2}\int_{-\infty}^{\infty}|a_1x+a_2|\,d\varphi_{+}^{'}(x)+\left(\frac{\psi_{+}^{'}(0)-\psi_{-}^{'}(0)}{2}\right)|a_1|,
\end{equation}
for every $a_1,a_2\in\ar$. Since the functions $\varphi,\psi$ are both convex, the one-sided derivatives are bounded, non-decreasing
functions defined everywhere, so the integral in (\ref{DorRepresentationTheorem}) is a  Riemann-Stieltjes integral. Moreover,
the norm is smooth at $e_2$ if and only if $\psi$ is differentiable at zero, in which case only the integral remains on the right-hand side of 
(\ref{DorRepresentationTheorem}).

\medskip
The definition of the norm (\ref{normDefinition}) can be generalized as follows. For an arbitrary normed space $F$, 
we denote by $\ar\oplus_{\littleE} F$ the vector space $\ar\oplus F$ equipped with the norm 
$\norma{(t,x)}=\norma{(\norma{x}_{\scriptscriptstyle{F}})e_1+te_2}_{\littleE}$. Here $\{e_1,e_2\}$ is assumed to be a sign-symmetric basis for~$E$.
The main result of \cite{Dor} states that under certain smoothness assumptions on the norm $\norma{\cdot}_{\littleE}$, a necessary condition for the space $\ar\oplus_{\littleE} F$ to be an $L_1$-subspace is that $F$ is isometrically isomorphic to the Hilbert space of its dimension. (\cite{Dor}, Theorem~$1.5$). For $p>2$, the space $\ell_p^2$ satisfies the smoothness assumptions in the main result, and so by taking $E=F=\ell_p^2$,  Dor was able to deduce that $\ell_p^3$ is not an $L_1$-subspace if $p>2$, thus confirming a conjecture made by Bolker.

\medskip

 In view of Dor's main result, one may ask: \emph{what happens if $F$ is already a Hilbert space?} 
This question leads  to an investigation of the finite dimensional spaces $\directSum{\ell_2^n}$ and the infinite dimensional ones $\directSum{\ell_2}$. The corresoponding rotationally-symmetric norms are those given by (\ref{normDefinition}) in the introduction. For example, it is not immediately clear for which $p$, if at all,  the spaces $\ar\oplus_{\ell_p^2}\ell_2^n$ are $L_1$-subspaces. 
\begin{tcolorbox}
\begin{proposition} If $n>1$, then $\ar\oplus_{\ell_p^2}\ell_2^n$ is an $L_1$-subspace if and only if $1\leq p\leq 2$.
\end{proposition}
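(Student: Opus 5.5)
The norm in question is $\norma{(t,x)}=\bigl(|x|_2^p+|t|^p\bigr)^{1/p}$ on $\ar\oplus\ell_2^n$. I will prove the two directions separately.

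For sufficiency ($1\le p\le 2$) I will use stable random variables. Since $\ell_2^n$ embeds isometrically into $L_1$, we may realize $|x|_2$ as $\mathbb{E}|\langle x,g\rangle|$, with $g$ a standard Gaussian vector suitably normalized. Since $1\le p\le 2$, there exist independent $p$-stable symmetric random variables. The key observation is that $\ell_p^2$ embeds into $L_q$ for every $q<p$, and in particular into $L_1$ when $p>1$. I will combine this with a subordination trick. Let $\theta_1,\theta_2$ be i.i.d.\ symmetric $p$-stable variables, independent of $g$, with $g$ normalized so that $\mathbb{E}|\langle x,g\rangle|\,$ carries the right constant. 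Define the variable
\[
X_{t,x}=\theta_1\,\frac{\langle x,g\rangle}{\mathbb{E}|\langle x/|x|,g\rangle|}\;\cdot\;\text{(scaled)}+\theta_2 t .
\]
The cleaner way to organize this is conditionally on $g$. We have $\mathbb{E}_\theta|\theta_1 a+\theta_2 t|=c_p(|a|^p+|t|^p)^{1/p}$. Taking $a=\langle x,g\rangle$ then gives $c_p\,\mathbb{E}_g(|\langle x,g\rangle|^p+|t|^p)^{1/p}$. That is not the target norm, so this naive attempt fails.

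Instead I will use the fact that for $p\le 2$ the Gaussian vector $g$ can be replaced by a symmetric $p$-stable-compatible vector. Write $\langle x,g\rangle \overset{d}{=}|x|_2\gamma$ with $\gamma$ a standard normal. Since $\gamma$ is itself $2$-stable, it can be written as $\gamma=\sqrt{W}\,\xi$, and for $p\le2$ a $2$-stable is a mixture of $p$-stables. So choose $\eta$ with $\eta\,\theta_1\overset{d}{=}\gamma'$, where $\gamma'$ is rotation-invariant in $\ar^n$. Concretely, take $Y=\eta\, G$ with $G$ a standard Gaussian vector in $\ar^n$ and $\eta\ge0$ independent, chosen so that $\langle x,Y\rangle\overset{d}{=}|x|_2\theta_1$ is symmetric $p$-stable. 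This is possible because a symmetric $p$-stable law, $p\le2$, is a scale mixture of Gaussians (Bochner subordination). Then $\langle x,Y\rangle+t\theta_2\overset{d}{=}(|x|_2^p+|t|^p)^{1/p}\theta$, and therefore
\[
\norma{(t,x)}=c_p^{-1}\,\mathbb{E}\bigl|\langle x,Y\rangle+t\theta_2\bigr|
\]
whenever $\mathbb{E}|\theta|<\infty$, i.e.\ whenever $p>1$. For $p=1$ the norm $|x|_2+|t|$ is an $\ell_1$-sum of two $L_1$-subspaces, hence itself an $L_1$-subspace.

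For necessity ($p>2$) it suffices to take $n=2$, since $\ar\oplus_{\ell_p^2}\ell_2^2$ is a subspace of the higher-dimensional spaces. I will use the Section 2 criterion announced in the introduction: $\varphi''(\sqrt{s})$ must be completely monotone for the infinite-dimensional space. The Section 3 finite-dimensional result gives sign conditions on derivatives of $\varphi''(\sqrt s)$. Here $\varphi(t)=(1+|t|^p)^{1/p}$, and near $0$ we have $\varphi''(t)\approx(p-1)|t|^{p-2}$. So $\varphi''(\sqrt s)\approx(p-1)s^{(p-2)/2}$, which for $p>2$ is increasing near $0$ and vanishes at $0$. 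A nonnegative non-increasing function (the first-order condition, $n=2$ in $\Ren{2n}=\Ren 4$) cannot do this. Hence $\ar\oplus_{\ell_p^2}\ell_2^3$ is not an $L_1$-subspace.

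The case $n=2$ in $\Ren{3}$ needs a direct argument, and for it I will apply Dor's theorem. Dor's smoothness hypotheses hold for $\ell_p^2$ when $p>2$, so $\ar\oplus_{\ell_p^2}F$ being an $L_1$-subspace forces $F$ to be Hilbertian, which gives no contradiction here. I therefore expect the direct $\Ren3$ case to be the main obstacle. My first attempt will be to compute the Fourier transform of $\norma{\cdot}$ (Koldobsky's criterion: $-\widehat{\norma{\cdot}}$ must be a positive distribution off the origin), reduced via rotational symmetry to a one-dimensional Abel-type integral of $\varphi''$. In that integral the vanishing of $\varphi''$ at $0$ should produce a negative region.
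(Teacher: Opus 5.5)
\textbf{Gap in the necessity direction.} You correctly reduce the case $p>2$ to $n=2$: you must show that the three-dimensional space $\ar\oplus_{\ell_p^2}\ell_2^2$ is not an $L_1$-subspace. That is exactly the case you leave unproved.

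None of the tools you invoke reach it. Dor's theorem gives no contradiction, as you note. Theorem~\ref{theCorollary} concerns only the infinite-dimensional space. The inversion formula of Theorem~\ref{myInversionTheorem} lives in even dimension $2n\geq 4$, so your Section~3 argument covers only $n\geq 3$. Non-embeddability passes from a subspace up to the larger space, never from $\Ren{4}$ down to $\Ren{3}$. Your closing sentence (``the vanishing of $\varphi''$ at $0$ should produce a negative region'') is a plan, not an argument, and carrying it out is the whole content of this direction.

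What is missing is a genuinely three-dimensional obstruction; the paper simply cites Theorem~3 of Koldobsky--Lonke. The degeneracy such results detect is visible here. By homogeneity, $\partial_t^2\norma{(t,x)}=|x|_2^{-1}\varphi''(t/|x|_2)$ with $\varphi''(s)=(p-1)|s|^{p-2}(1+|s|^p)^{1/p-2}$. So for $p>2$ the second derivative in the axis direction vanishes on the whole hyperplane $t=0$. Koldobsky-type second-derivative tests show that, in dimension at least $3$, this rules out an isometric embedding into $L_1$. Your Fourier-transform idea is the mechanism behind those tests, but you would have to actually exhibit a point off the origin where $\widehat{\norma{\cdot}}$ is positive.

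\textbf{Secondary remarks.} Even your $n\geq 3$ step is not airtight as written. Theorem~\ref{myInversionTheorem} assumes $f\in C^\infty(\Ren{2n}\backslash\{0\})$, which fails at the pole and the equator unless $p\in 2\mathbb{N}$. You would need a localization argument to use the formula on $(0,1)$.

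Your sufficiency argument is correct once the abandoned attempts are discarded, and it takes a different route from the paper. Take $Y=\eta G$ with $\eta^2$ a positive $(p/2)$-stable variable independent of $G$ ($\eta$ constant if $p=2$), and let $\theta_2$ be an independent copy of the resulting symmetric $p$-stable marginal. Then $\langle x,Y\rangle+t\theta_2$ has characteristic function $\exp(-c|s|^p(|x|_2^p+|t|^p))$. This gives an explicit isometric embedding into $L_1$ for $1<p\leq 2$, and $p=1$ is the $\ell_1$-sum case. Your remark that a $2$-stable law is a mixture of $p$-stables is backwards; only the direction you actually use ($p$-stable as a scale mixture of Gaussians) is true.

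The paper instead averages $\norma{gx}_p^p$ over the stabilizer of the axis to get positive definiteness of $\exp(-(c_{n,p}|y|_2^p+|t|^p))$. It then invokes Bretagnolle--Dacunha-Castelle--Krivine to embed into $L_p$, and uses $L_p\hookrightarrow L_1$. Your construction is more explicit and bypasses that theorem; the paper's symmetrization argument is more structural.
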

\end{tcolorbox}
\begin{proof}
If $p>2$, then Theorem~$3$ of \cite{KoldLonke} implies immediately that $\ar\oplus_{\ell_p^2}\ell_2^n$  is not an $L_1$-subspace. Assume now that $1\leq p\leq 2$. Let $\text{O}(n+1)$ denote the orthogonal group in $\Ren{n+1}$ and let
$\mu$ be its normalized Haar-measure. Let $G_{n+1}\subset \text{O}(n+1)$ denote the stabilizer of the vector $e_{n+1}$, with its normalized Haar-measure~$\nu$. Write each point $x\in\Ren{n+1}$ as $(y,t)$ where
$y\in\Ren{n}$ and $t\in\ar$. Then
\[
\begin{aligned}
\int_{G_{n+1}}\norma{gx}_p^p\,d\nu(g)&=\int_{\text{O}(n)}\norma{hy}_p^p\,d\mu(h)+|t|^p\\
&=\norma{y}_2^p\int_{\sphere{n-1}}\norma{u}_p^p\,d\sigma_{n-1}(u)+|t|^p\\
&=c_{n,p}\norma{y}_2^p+|t|^p,
\end{aligned}
\]
where $\sigma_{n-1}$ is the normalized rotation invariant measure on $\sphere{n-1}$. For each $g\in G_{n+1}$, the norm $x\to \norma{gx}_p$ is isometric to the norm $\norma{\cdot}_p$. Since
$\ell_p^{n+1}$ is isometrically isomorphic to a subspace of  $L_p$, the function $\exp(-\norma{gx}_p^p)$ is positive-definite in $\Ren{n+1}$. Products and pointwise limits of continuous positive definite functions are also 
positive definite, hence the function 
\[ x\to \exp(-\int_{G_{n+1}}\norma{gx}_p^p\,d\nu(g)),\qquad (x\in\Ren{n+1})\]
is also a positive definite function. Therefore $\Ren{n+1}$ with the norm $(c_{n,p}\norma{y}_2^p+|t|^p)^{1/p}$ is isometrically isomorphic to a subspace of $L_p$. By applying a linear
transformation we dedcue that $\ar\oplus_{\ell_p^2}\ell_2^n$  also isometric to a subspace of $L_p$, and when $1\leq p\leq 2$, this implies that it is also an $L_1$-subspace.
\end{proof}

\medskip

 \medskip
 \subsection*{Two preliminary lemmas}
 A random variable with values in $\Ren{n}$  
is called a \emph{random vector in $\Ren{n}$}. 
An \emph{isotropic vector in} $\Ren{n}$ is a random vector $X$ in $\Ren{n}$ whose probability distribution is invariant under orthogonal transformations. That is, 
for every orthogonal transformation $U$ in $\Ren{n}$, the probability-distributions of $U(X)$ and~$X$ are identical. 
\medskip

If $X$ is an isotropic  vector in $\Ren{n}$ then the probability-distributions of each of its components relative to some fixed orthonormal basis are all equal, and do not depend on the choice of the basis.
Hence the \emph{marginal distribution} of an isotropic vector can be defined as the probability-distribution of its first component, relative to the standard basis. \bigskip

\begin{tcolorbox}
\begin{lemma}\label{myTheorem} Let $\mu$ be  a symmetric probability measure on $\ar$, not concentrated at zero,
 with a finite first moment. If $E=\text{span}\{x,1\}$, then  $\mySpace{n+1}{E,\{x,1\}}$  is an $L_1$-subspace if and only
 if $\mu$ is a marginal distribution of an isotropic vector in $\Ren{n}$.
\end{lemma}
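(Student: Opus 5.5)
The plan is to identify $E=\text{span}\{x,1\}$ with the two-dimensional subspace of $L_1(\mu)$ spanned by the coordinate function $x$ and the constant $1$. Thus $\norma{a_1x+a_2}_E=\int|a_1x+a_2|\,d\mu(x)$. Since $\mu$ is not concentrated at zero, $x$ and $1$ are linearly independent; since $\mu$ is symmetric, $\{x,1\}$ is sign-symmetric. Hence, by (\ref{normDefinition}), for $(t,a)\in\ar\oplus\Ren{n}$,
\[
\norma{(t,a)}=\int_{\ar}\big|\,|a|\,x+t\,\big|\,d\mu(x).
\]
Both directions are then reduced to comparing this formula with an $L_1$-representation of the norm.

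\emph{Sufficiency.} Suppose $X$ is an isotropic vector in $\Ren{n}$ with marginal $\mu$, defined on a probability space $(\Omega,P)$. Define $J(t,a)=t+\innerproduct{a}{X}$. Isotropy says $\innerproduct{a}{X}$ has the law of $|a|X_1$, and $X_1$ has law $\mu$. Therefore
\[
\norma{J(t,a)}_{L_1(P)}=\int\big|\,t+|a|x\,\big|\,d\mu(x)=\norma{(t,a)}.
\]
The finite first moment of $\mu$ ensures $J$ maps into $L_1$. So $J$ is a linear isometric embedding, and this direction is immediate.

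\emph{Necessity.} Assume $\mySpace{n+1}{E,\{x,1\}}$ embeds in $L_1$. By L\'evy's representation there is an even finite measure $\nu$ on $\sphere{n}$ with
\[
\norma{(t,a)}=\int_{\sphere{n}}|t\theta_0+\innerproduct{a}{\theta'}|\,d\nu(\theta),\qquad \theta=(\theta_0,\theta').
\]
The norm is invariant under orthogonal maps acting on the $a$-coordinates. Averaging $\nu$ over this stabilizer with respect to Haar measure, I may assume $\nu$ is invariant too. Split $\nu=\nu_0+\nu_1$, with $\nu_0$ the restriction to $\{\theta_0=0\}$.
\begin{itemize}
\item By invariance, $\nu_0$ contributes $c|a|$ for some constant $c\ge0$.
\item On $\{\theta_0\ne0\}$, write $|t\theta_0+\innerproduct{a}{\theta'}|=|\theta_0|\,|t+\innerproduct{a}{\theta'/\theta_0}|$. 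Let $P$ be the push-forward of $|\theta_0|\,d\nu_1$ under $\theta\mapsto\theta'/\theta_0$, and let $Y$ be the identity random vector under $P$.
\end{itemize}
Putting $a=0$ gives $|t|=\norma{(t,0)}=|t|\,P(\Ren{n})$, so $P$ is a probability measure. Since $\nu_1$ is invariant, $Y$ is isotropic, and we obtain
\[
\int\big|\,t+|a|x\,\big|\,d\mu(x)=\mathbb{E}\,|t+\innerproduct{a}{Y}|+c|a| .
\]

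The key step, and I expect the main obstacle, is extracting distributional information from this identity. For fixed $a$, take the second distributional derivative in $t$ of both sides. The term $c|a|$ is constant in $t$ and disappears. For a probability law $\rho$, the second derivative of $t\mapsto\int|t+y|\,d\rho(y)$ equals $2$ times the reflection of $\rho$; the symmetry of both laws makes the reflection harmless. Hence the law of $\innerproduct{a}{Y}$ equals the law of $|a|\xi$ with $\xi\sim\mu$. In particular $Y_1$ has law $\mu$, so $\mu$ is the marginal of the isotropic vector $Y$. Isotropy of $Y$ was already available, but this also follows from the characteristic function $\mathbb{E}\,e^{i\innerproduct{a}{Y}}=\hat\mu(|a|)$ being radial.

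Two routine points remain. First, $c=0$: set $t=0$ and use $\mathbb{E}|\innerproduct{a}{Y}|=|a|\int|x|\,d\mu<\infty$. Second, the distributional differentiation must be legitimate, which requires local integrability; this comes from the finite first moments on both sides.
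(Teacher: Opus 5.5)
Your proof is correct. The sufficiency half is the same as the paper's, but the necessity half takes a genuinely different route.

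\textbf{The paper's route.} It starts from an arbitrary isometric embedding $J$ into $L_1(0,1)$ and sets $X_k=Je_{k+1}$, after first normalizing $J(e_1)$ to the constant function $1$. Justifying that normalization is most of its proof. It needs Lemma~1.2 of Delbaen--Jarchow--Pe{\l}czy\'nski, together with Hardin's Lemma~3.4 to show that $J(e_1)$ has full support in the image, i.e.\ (\ref{fullSupport}).

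\textbf{Your route.} You pass to the L\'evy representation on $\sphere{n}$ and do the normalization by hand. The reweighting $|\theta_0|\,d\nu$ and the map $\theta\mapsto\theta'/\theta_0$ are exactly the change of density that turns $\theta\mapsto\theta_0$ (the image of $e_1$) into the constant $1$. The part of $\nu$ on the equator $\{\theta_0=0\}$, where that function vanishes, contributes only a term independent of $t$, which the second $t$-derivative ignores. The final step is the same in both proofs: the potential $t\mapsto\int|t+y|\,d\rho(y)$ determines $\rho$.

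\textbf{What each buys.} Your argument is self-contained apart from the standard L\'evy representation, which the paper itself invokes in Section~3. Your observation $c=0$ is not needed for the conclusion. It does show, however, that $\nu(\{\theta_0=0\})=0$, which is precisely the full-support property (\ref{fullSupport}), obtained by a one-line computation instead of Hardin's lemma.

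Neither the Haar averaging nor the radiality remark is strictly required. Without averaging, the equatorial part is still a $t$-independent function of $a$ and is killed by the second $t$-derivative. Equality of the laws of $\innerproduct{a}{Y}$ and $|a|\xi$ for all $a$ already forces isotropy.

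The paper's approach gives a somewhat stronger structural statement: every isometric embedding, after the DJP change of density, arises from an isotropic vector in this way. Your approach exhibits one such embedding, which is all the lemma needs.
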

\end{tcolorbox}
\begin{proof} 
The identity function of $\ar$ belongs to $L_1(\ar,\mu)$ because $\mu$ has a finite first moment. The set $\{x,1\}$ is linearly independent, because in $L_1(\ar,\mu)$ the function~$x$ is proportional to some constant~$c$ if and only if 
$\mu\{x\neq c\}=0$, which by symmetry of $\mu$ implies $c=0$ and $\mu$ must then be concentrated at zero.
The functions $\{x,1\}$ form a sign-symmetric basis for their span in $L_1(\ar,\mu)$ because $\mu$ is symmetric.

 Assume  $J:\mySpace{n+1}{E}\hookrightarrow L_1$ is an isometric linear embedding.  The underlying vector space of
  $\mySpace{n+1}{E}$ is $\ar\oplus\ell_2^n$, with the standard basis $\{e_k\}_{k=1}^{n+1}$, where according to (\ref{normDefinition}) 
 \[ \norma{(t,a_1,\dots, a_n)}_{\mySpace{n+1}{E}}=\int_{\ar}|\bigg(\sum_{k=1}^na_k^2\bigg)^{1/2}x+t|\,d\mu(x).\]
 We proceed under the assumption that $J(e_1)= 1$. A justification for this assumption is deferred to avoid disrupting the main argument.
 
\medskip

Put $X_{k-1}=Je_k$ for $k=2,\dots n+1$.  If $(a_1,\dots,a_n) \in\Ren{n}$ is a unit vector, then since $J(e_1)=1$, 
\[ \int_0^1|\sum_{k=1}^na_kX_k(\omega)-t|\,d\omega=\int_{\ar}|\bigg(\sum_{k=1}^na_k^2\bigg)^{1/2}x-t|\,d\mu=\int_{\ar}|x-t|\,d\mu, \]
 for every $t\in\ar$.
Hence the probability distribution of $\sum_{k=1}^na_kX_k$ coincides with the probability distribution of $\mu$. This holds for every unit vector $(a_1,\dots,a_n)$, so that
the random vector $(X_1,\dots X_n)$ is isotropic, and its marginal distribution is $\mu$.

\medskip

Conversely, if $\mu$ is a marginal distribution of an isotropic vector $X$ in $\Ren{n}$, then  the map 
\[ J:\mySpace{n+1}{E}\hookrightarrow L_1,\]
defined by
\[ J(t,a_1,\dots, a_n)=\sum_{k=1}^na_kX_k+t , \]
is an isometric linear embedding, because
\[
\begin{aligned}
\norma{J(t,a_1,\dots,a_n)}_{L_1}&=\norma{\sum_{k=1}^na_kX_k+t}_{L_1}=\int_{\ar}|\bigg(\sum_{k=1}^na_k^2\bigg)^{1/2}x+t|\,d\mu\\
&=\norma{(t,a_1,\dots,a_n)}_{\mySpace{n+1}{E}}.\\
\end{aligned}
\] 

\medskip

It remains to justify the assumption that  $J(e_1)=1$, a justification that turns out to be much more involved than the main argument.
That $J(e_1)$ can assumed to be the constant function~$1$ was mentioned as a seemingly innocent normalization in Dor's paper (\cite{Dor}, p. 264) with no justification. However, not every subspace of $L_1$ is unital, that is, contains constant functions, and so some kind of justification is necessary.  It follows from Lemma 1.2 of \cite{DJP} that every nonzero subspace of $L_1$ is isometrically isomorphic to a unital subspace of $L_1$. In particular, if $F$ denotes the image of $\mySpace{n+1}{E}$ under $J$, then 
there is an isometry from $L_1$ onto itself that maps $F$ onto a unital subspace. 
The proof of Lemma 1.2 of \cite{DJP} reveals that in order to make sure that $J(e_1)$ is transformed to the constant function $1$ under such an isometry,
it suffices to check that $J(e_1)$ has full support in $F$, which means that if $f_1=J(e_1)$, then for every $g\in F$, 
\begin{equation}\label{fullSupport}
\lambda(\text{supp}(g)\backslash\text{supp}(f_1))=0,
\end{equation}
where $\lambda$ is the Lebesgue measure on $(0,1)$. Once we know that $f_1$ has full support in $F$, we can argue as in the proof of Lemma 1.2 of \cite{DJP} to produce an isometry $S:F\to L_1$ such that $S(f_1)= 1$, and then replace~$J$ by $S\circ J$. 

\medskip

To prove (\ref{fullSupport}), pick  $g\in F$, and scalars $\{a_i\}_{i=1}^{n+1}$, not all zero, such that 
\[ g=a_1f_1+J(\sum_{i=2}^{n+1}a_ie_i). \]
If $a_i=0$ for all $2\leq i\leq n+1$ then  (\ref{fullSupport}) is valid because $a_1\neq 0$. Otherwise, the vector
$v=\sum_{i=2}^{n+1}a_ie_i$ is nonzero,  so if $u=v/|v|$, then $g$ belongs to the span of $\{f_1,J(u)\}$.
By the definition of the norm in $\mySpace{n+1}{E}$, 
there is an isometric isomorphism
\begin{equation}\label{twoDimIsometry}
T:E\to\text{span}\{e_1,u\},\qquad T(s\cdot x+t\cdot 1)=su+te_1.
\end{equation}
If $J_{u}$ denotes the restriction of $J$ to the subspace $\text{span}\{e_1,u\}$, then $J_{u}\circ T$ is an isometry from $E$ onto $\text{span}\{f_1,J(u)\}$, that maps the constant function $1$ to the function $f_1$. 
By Lemma~3.4 of \cite{Hardin}, the function $f_1$ has full support in 
$\text{span}\{f_1,J(u)\}$. Since $g$ belongs to this two dimensional space, (\ref{fullSupport}) holds for every $g\in F$.
\end{proof}

The second Lemma is used in the proof of the main result of this section.
\begin{tcolorbox}
\begin{lemma} \label{subspaceLemma}
$\mySpaceH{E,\beta}$ is an {$L_1$-subspace} if and only if $\mySpace{n+1}{E,\beta}$ is
an {$L_1$-subspace} for every $n\geq 1$.
\end{lemma}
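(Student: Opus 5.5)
The forward direction is immediate: for each $n\geq 1$ the map $(t,a_1,\dots,a_n)\mapsto (t,a_1,\dots,a_n,0,0,\dots)$ is an isometric embedding of $\mySpace{n+1}{E,\beta}$ into $\mySpaceH{E,\beta}$. This follows directly from the definition (\ref{normDefinition}) and its infinite-dimensional analogue. Composing it with an isometric embedding of $\mySpaceH{E,\beta}$ into $L_1$ shows that every $\mySpace{n+1}{E,\beta}$ is an $L_1$-subspace.

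For the converse I will use the standard finite-representability fact: a separable normed space all of whose finite-dimensional subspaces embed isometrically into $L_1$ is itself isometric to a subspace of $L_1$. One route is the ultraproduct argument. $L_1$-spaces are stable under ultraproducts, an ultraproduct of $L_1(\mu_i)$ is an abstract $L_1(\mu)$-space, and a separable subspace of an $L_1(\mu)$ embeds isometrically into $L_1(0,1)$. The cleaner route for our situation is the Lévy/positive-definite characterization: a normed space $X$ embeds into $L_1$ iff $\exp(-\norma{x})$ is positive definite on every finite-dimensional subspace. That condition is plainly local, since it only involves finitely many vectors at a time.

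So the plan is as follows. Given $x_1,\dots,x_m\in\mySpaceH{E,\beta}$ and the corresponding matrix $\bigl(\exp(-\norma{x_i-x_j})\bigr)$, first approximate each $x_i$ by vectors whose $\ell_2$-part has finitely many nonzero coordinates. The norm on $\mySpaceH{E,\beta}$ is continuous with respect to the $\ar\oplus\ell_2$ topology by Proposition~\ref{distanceProposition}, which gives equivalence with $\ell_2$. Hence positive semidefiniteness of the approximating matrices passes to the limit. The approximating vectors lie in an isometric copy of $\mySpace{n+1}{E,\beta}$ for $n$ large, and there the matrix is positive semidefinite by hypothesis. Therefore $\exp(-\norma{\cdot})$ is positive definite on $\mySpaceH{E,\beta}$. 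Restricted to each finite-dimensional subspace, Lévy's characterization then gives a representation $\norma{x}=\int_{S}|\langle x,\theta\rangle|\,d\nu(\theta)$ by a symmetric measure on the sphere of the dual.

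The main obstacle is passing from "every finite-dimensional subspace embeds" to an actual isometric embedding of the whole infinite-dimensional space. Here I would invoke the known result, in the form stated via Kolmogorov/Schoenberg or Lindenstrauss–Pełczyński, that $L_1$-subspaces are exactly the spaces finitely representable in $L_1$ in the isometric sense. Concretely, the positive-definite function $\exp(-\norma{x})$ on the separable Hilbert-like space defines, by Kolmogorov's consistency theorem, a Gaussian-type stable process. More precisely, it defines a symmetric $1$-stable process $(\xi_x)$ with $\mathbb{E}e^{i\xi_x}=e^{-\norma{x}}$, linear in $x$ because the characteristic functional is positive definite on the whole vector space. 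A symmetric $1$-stable linear process has the representation $\xi_x=\int f_x\,dM$ with $\norma{f_x}_{L_1}=\norma{x}$, and $x\mapsto f_x$ is the desired isometry into $L_1$. If needed, I would instead simply cite this step from the literature rather than redo it.
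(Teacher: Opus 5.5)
Your proposal is correct and follows the paper's proof. The paper also proves that $\exp(-\norma{x})$ is positive definite on $\mySpaceH{E,\beta}$ by truncating to the first $n+1$ coordinates, where the hypothesis applies, and passing to the limit; it then cites the Bretagnolle--Dacunha-Castelle--Krivine theorem. The differences are minor: the paper gets convergence of the truncations from the monotone Schauder basis rather than from Proposition~\ref{distanceProposition} (both are more than needed, since $x-P_{n+1}x=(0,w_n)$ with $w_n$ the $\ell_2$-tail gives $\norma{x-P_{n+1}x}=|w_n|\,\norma{f_1}_E\to 0$ directly). Also, that theorem holds for arbitrary Banach spaces, so the ``main obstacle'' you describe is already covered by the citation, and your stable-process sketch amounts to reproving it.
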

\end{tcolorbox}
\begin{proof}
The 'only if' part is clear. 
 
\medskip

 Assume that for every $n\geq1$ the space  $\mySpace{n+1}{E}$ is an $L_1$-subspace. In \cite{BDCK}, (Th\'eor\`eme~$2$, p.~$238$), it was proved that if $1\leq p\leq 2$, then a Banach space $X$ is isometrically isomorphic to a subspace of $L_p$ if and only if the function $\exp(-\norma{x}^p)$ is positive definite on~$X$.

To show that $\exp(-\norma{x})$ is a positive-definite function on $\mySpaceH{E}$, we must prove that if $\{x_1,\dots x_m\}$ is a finite
 subset of vectors in $\mySpaceH{E}$, and $(\xi_i)_{i=1}^m$ are  real scalars, then 
 \begin{equation}\label{wannaProveThis}
\sum_{1\leq k,l\leq m}\exp(-\norma{x_k-x_l})\xi_k\xi_l\geq 0.
\end{equation}

It is easy to verify that the space $\mySpaceH{E}$ is  the closed linear span of the nonzero vectors $\{e_i\}_{i=0}^{\infty}$. Moreover,
 for every choice of scalars $\{\alpha_i\}_{i=0}^{\infty}$ and positive integers $m<n$, the definition of the norm in
 $\mySpaceH{E}$ and Lemma~\ref{easyLemma} together imply that
\[ \norma{\sum_{i=0}^m\alpha_ie_i}_{\scriptscriptstyle\mySpaceH{E}}\leq \norma{\sum_{i=0}^n\alpha_ie_i}_{\scriptscriptstyle\mySpaceH{E}}.\]
It follows from  \cite{LT}, proposition 1.a.3,  that $\{e_i\}_{i=0}^{\infty}$ is a Schauder basis in $\mySpaceH{E}$.

\medskip

Let $P_n$ denote the natural projection in $\mySpaceH{E}$ onto the $n$ first basis vectors.
Since the range of $P_{n+1}$ is $\mySpace{n+1}{E}$, the assumption and the theorem referred to at the beginning of the proof entail that for every $n$,
\begin{equation}\label{haveThis}
\sum_{1\leq k,l\leq m}\exp(-\norma{P_nx_k-P_nx_l})\xi_k\xi_l\geq 0.
\end{equation}
Since $P_nx_k\to x_k$ as $n\to\infty$ for each $1\leq k\leq m$, (\ref{wannaProveThis}) follows from (\ref{haveThis}).
\end{proof}
 
 \medskip
 

We are ready for the main result of this section.
Recall that a real valued function $f(t)$ is said to be \emph{completely monotone in $(0,\infty)$}  if
$f^{(k)}(t)\geq 0$ for $t>0$ and $k=0,1,2,\dots$. If in addition $f$ is also right-continuous at zero, then
it is said to be \emph{completely monotone in $[0,\infty)$}.

\begin{tcolorbox}
\begin{theorem}\label{theCorollary}
Let $E=(\Ren{2},\|\cdot\|)$ such that $\{e_1,e_2\}$ is a sign-symmetric normalized basis, and the norm is smooth at $e_2$.
Put $\varphi(t)=\norma{e_1+te_2}$, and assume that $\varphi(t)$ is of class $C^2(\mathbb{R})$.
Then  $\mySpaceH{E}$ is an $L_1$-subspace if and only if the function $t\to\varphi''(\sqrt{t})$ is completely monotone for  $t\geq 0$.
\end{theorem}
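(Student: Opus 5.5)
Combine Dor's representation (\ref{DorRepresentationTheorem}) with Lemma~\ref{myTheorem}, Lemma~\ref{subspaceLemma}, and Schoenberg's characterization of distributions that are marginals of isotropic vectors in every dimension.

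First, I will bring $E$ into the form used in Lemma~\ref{myTheorem}. Since the norm is smooth at $e_2$, the correction term in (\ref{DorRepresentationTheorem}) vanishes. Applying (\ref{DorRepresentationTheorem}) with $\varphi(t)=\norma{e_1+te_2}$ (after relabeling the sign) gives
\[ \norma{a e_1+ b e_2}=\tfrac12\int_{\ar}|a+ b x|\,\varphi''(x)\,dx , \]
with $\varphi''\ge 0$ continuous. Sign-symmetry makes $\varphi$ even, so $\varphi''$ is even. Normalization gives $\norma{e_2}=1$, so $\tfrac12\int|x|\varphi''(x)\,dx=1$, and $\tfrac12\int\varphi''=\varphi'(\infty)=\norma{e_2}=1$. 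Hence $d\mu=\tfrac12\varphi''(x)\,dx$ is a symmetric probability measure with finite first moment, not concentrated at $0$. The map $e_1\mapsto 1$, $e_2\mapsto x$ is then an isometry of $E$ onto $\mathrm{span}\{1,x\}\subset L_1(\mu)$ carrying $\{e_2,e_1\}$ to $\{x,1\}$. I will check that this matches the roles of $f_1,f_2$ in (\ref{normDefinition}), so that $\mySpaceH{E}$ corresponds to $\mySpace{n+1}{\mathrm{span}\{x,1\},\{x,1\}}$. By Lemma~\ref{subspaceLemma} and Lemma~\ref{myTheorem}, $\mySpaceH{E}$ is an $L_1$-subspace if and only if $\mu$ is the marginal of an isotropic vector in $\Ren{n}$ for every $n$.

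Second, I will invoke Schoenberg's theorem: a symmetric probability measure is a marginal of isotropic vectors in all dimensions if and only if it is a scale mixture of centered Gaussians, $\mu=\int_{[0,\infty)}N(0,\sigma^2)\,d\rho(\sigma)$. Equivalently, its characteristic function has the form $\hat\mu(\xi)=g(\xi^2)$ with $g$ completely monotone, since the sequence of isotropic vectors gives a positive definite radial function on $\ell_2$. The atom at $\sigma=0$ must vanish because $\mu$ has a continuous density. On the density side, a mixture of Gaussians has density
\[ \tfrac12\varphi''(x)=\int_{(0,\infty)}(2\pi\sigma^2)^{-1/2}e^{-x^2/(2\sigma^2)}\,d\rho(\sigma)=\int_{[0,\infty)}e^{-sx^2}\,d\tau(s) \]
for a positive measure $\tau$. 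By Bernstein's theorem, this says exactly that $t\mapsto\varphi''(\sqrt t)$ is completely monotone on $(0,\infty)$. Continuity of $\varphi''$ at $0$ gives right-continuity at $0$, hence complete monotonicity on $[0,\infty)$.

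Conversely, if $\varphi''(\sqrt t)$ is completely monotone, Bernstein gives $\varphi''(x)=\int e^{-sx^2}d\tau(s)$. Finiteness of $\int\varphi''$ forces $\tau(\{0\})=0$. This writes $\mu$ as a Gaussian mixture, so $\mu$ is a marginal of $R\,G_n$ with $G_n$ standard Gaussian in $\Ren{n}$ and $R$ independent with law $\rho$. That vector is isotropic in every dimension, and the lemmas finish the proof.

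The main obstacle is the density-level translation of Schoenberg's theorem in the forward direction. The theorem gives a mixing measure $\rho$ possibly with mass near $\sigma=0$. I must show that, given that $\mu$ has the continuous density $\tfrac12\varphi''$, the identity $\tfrac12\varphi''(x)=\int e^{-sx^2}d\tau(s)$ holds pointwise for every $x$, including $x=0$, where the integral might a priori diverge. My first attempt will be to argue via Fubini for $x\ne0$, then use monotone convergence as $x\to0$ together with continuity of $\varphi''$. If that fails, I will instead take the route through the characteristic function $g(\xi^2)$ and Fourier inversion.
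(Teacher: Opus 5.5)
There is one concrete error, and it sits exactly at the check you deferred. The rest of your plan follows the paper's proof: Dor's formula identifies $E$ with $\mathrm{span}\{x,1\}\subset L_1(\mu)$, where $d\mu=\frac12\varphi''\,dx$. Lemmas~\ref{subspaceLemma} and~\ref{myTheorem} reduce the question to $\mu$ being a marginal of an isotropic vector in every $\Ren{n}$. Schoenberg turns this into a Gaussian mixture, and Bernstein turns that into complete monotonicity of $\varphi''(\sqrt t)$.

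The error is your formula $\norma{ae_1+be_2}=\frac12\int|a+bx|\varphi''(x)\,dx$. For $b\neq0$ its right side equals $|b|\varphi(a/b)=\norma{be_1+ae_2}$, so it interchanges $e_1$ and $e_2$. Your own line $\frac12\int\varphi''=\varphi'(\infty)=\norma{e_2}$ is correct and already contradicts it, since your formula at $(a,b)=(1,0)$ gives $\norma{e_1}$. The inconsistency is hidden only because both norms equal $1$.

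The correct formula is (\ref{DorRepresentation}). It comes from $\norma{ae_1+be_2}=|a|\varphi(b/a)$ together with $\varphi(s)=\frac12\int|s-x|\varphi''(x)\,dx$; smoothness at $e_2$ is what kills the additive constant here. It reads $\frac12\int|ax-b|\varphi''(x)\,dx$, so the isometry is $e_1\mapsto x$, $e_2\mapsto 1$ (using the symmetry of $\mu$).

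With your map $e_1\mapsto 1$, $e_2\mapsto x$, the deferred check fails. In $\mySpaceH{E}=\mySpaceH{E,\{e_1,e_2\}}$ the $\ell_2$-coordinates multiply $e_1$, while in Lemma~\ref{myTheorem} they multiply $x$. So as written, the reduction does not apply to the space in the theorem.

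Once this is corrected, the argument goes through and differs from the paper only in presentation. The paper passes to characteristic functions, $(\varphi'')^{\wedge}(t)=2\int e^{-t^2u}\,d\sigma(u)$, and Fourier-inverts; for the converse it runs Schoenberg's Theorem~1 backwards. You stay with densities, and for the converse you exhibit the isotropic vector $RG_n$ directly, which is more transparent.

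Your first attempt at the ``main obstacle'' works:
\begin{itemize}
\item For $x\neq0$ the kernel $(2\pi\sigma^2)^{-1/2}e^{-x^2/(2\sigma^2)}$ is bounded by $(2\pi e)^{-1/2}|x|^{-1}$ uniformly in $\sigma$.
\item Hence the mixture density is finite and continuous on $\ar\setminus\{0\}$. Being another continuous density of $\mu$ there, it coincides with $\frac12\varphi''$ off $0$.
\item Monotone convergence as $x\to0$ then gives $\tau((0,\infty))=\frac12\varphi''(0)<\infty$.
\end{itemize}
This finiteness is exactly what lies behind the paper's unjustified assertion that the right side of (\ref{fundEq}) belongs to $L_1(\ar)$.
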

\end{tcolorbox}
\begin{proof}
By (\ref{DorRepresentationTheorem})
\begin{equation}\label{DorRepresentation}
 \norma{a_1e_1+a_2e_2}=\frac{1}{2}\int_\ar |a_1x-a_2|\,\varphi''(x)\,dx, \qquad (a_1,a_2\in\ar). 
 \end{equation}

If $\mu$ is the measure on $\ar$ whose density with respect to the Lebesgue meausre is $\frac{1}{2}\varphi''$, then $\mu$ is a symmetric probability measure on $\ar$, and
(\ref{DorRepresentation}) shows that $E$ is isometrically isomorphic to the span of the functions $\{x,1\}$ in $L_1(\ar,\mu)$.  

\medskip

Assume now that $\mySpaceH{E}$ is an $L_1$-subspace. Fix $n\in\mathbb{N}$. The space  $\mySpace{n+1}{E}$ is an $L_1$-subspace, so by Lemma~\ref{myTheorem}, $\mu$ is a marginal distribution of an isotropic vector $X$ in $\Ren{n}$. If $\Phi_X$ is the characteristic function of $X$ and $\Phi_\mu$ is the characteristic function of~$\mu$, then orthogonal invariance implies:
\[ \Phi_X(Y)=E(e^{i Y\cdot X})=E(e^{i|Y|X_1})=\Phi_\mu(|Y|),\quad (Y\in\Ren{n}).\]
By a result of Schoenberg, (\cite{Schoenberg} , Theorem~$1$),   there exists a non-decreasing and bounded function $\alpha_n(u)$ defined on $[0,\infty)$ such that
\begin{equation}\label{finiteDimCFunctionRep}
\Phi_\mu(t)=\int_0^\infty \Omega_n(tu)\,d\alpha_n(u),\qquad (t\in\ar),
\end{equation}
where $\Omega_n$ is the characteristic function of the marginal distribution of a unit vector uniformly distributed over the sphere $\sphere{n-1}$. 

These considerations hold for all $n$. By another result of Schoenberg (\cite{Schoenberg}, Theorem~$2$),  there exists a non-decreasing and bounded function $\sigma(u)$ defined on $[0,\infty)$ such that
\begin{equation}\label{infiniteDimCFunctionRep}
\Phi_\mu(t)=\int_0^\infty e^{-t^2u}\,d\sigma(u), \qquad (t\in\ar).
\end{equation}

The characteristic function of $\mu$ is the Fourier transform of its  density. Hence
\begin{equation}\label{fundEq}
(\varphi'')^{\wedge}(t)=2\int_0^{\infty}e^{-t^2u}\,d\sigma(u), \qquad (t\in\ar).
\end{equation}
As a function of $t$, the right hand side of (\ref{fundEq}) is in $L_1(\ar)$. Take the Fourier transforms of both sides and
use the fact that $\varphi''$ is continuous and even to deduce that 
\[ \varphi''(t)=2\int_0^{\infty}e^{-t^2/4x}\sqrt{\frac{\pi}{x}}\,d\sigma(x), \qquad (t\in\ar).\]
It follows that $\varphi''(\sqrt{t})$ is completely monotone in $[0,\infty)$. (See also \cite{BN}, p~$223$).

\medskip

These implications can be reversed. If $\varphi''(\sqrt{t})$ is completely monotone in $[0,\infty)$, then $(\varphi'')^{\wedge}(\sqrt{t})$ is also completely monotone in $[0,\infty)$.
By Bernstein's theorem, (\cite{Bernstein1928}, p.$56$), there exists a non-decreasing bounded function $\sigma$ such that
(\ref{infiniteDimCFunctionRep}) and (\ref{fundEq}) are valid. For every $n\in\mathbb{N}$, Schoneberg's theorem can now be
applied in the reverse direction, to deduce  from (\ref{infiniteDimCFunctionRep}) a representation of the form (\ref{finiteDimCFunctionRep}). As a result,  for every $n$ the measure $\mu$ is a marginal distribution of an isotropic vector in $\Ren{n}$.  By Lemma~\ref{myTheorem}, the space $\mySpace{n+1}{E}$ is an $L_1$-subspace for all $n$, and by Lemma~\ref{subspaceLemma}, so is $\mySpaceH{E}$.
\end{proof}

\begin{remark}
To obtain the integral representation (\ref{DorRepresentation}) it is not necessary to assume that 
the second derivative $\varphi''$ is continuous everywhere. By a result of Koldobsky, the second derivative $\varphi''$,
viewed as a distribution in the sense of Schwartz, is a positive, finite measure with a finite first moment. Moreover,
if we assume that $\varphi''$ has no atom at zero, and is continuous for $t\neq 0$, and also assume that the norm is smooth
at $e_2$, then the integral representation (\ref{DorRepresentation}) still holds. Under these assumptions,
the arguments of the first part of the proof of the preceding theorem can be applied, but when $\varphi''(t)$ is recovered
back from its Fourier transform, there is no information about $t=0$, and so we can only deduce that $\varphi''(\sqrt{t})$ is
completely monotone in $(0,\infty)$. This can be applied, for instance, to show that the function
\[ f_p(t)=t^{\frac{p}{2}-1}(t^{p/2}+1)^{\frac{1}{p}-2}\qquad (t>0), \]
is completely monotone in $(0,\infty)$ if $1\leq p\leq 2$. (If $p>2$ it is not even monotone.) More details are to be found in the appendix.

\end{remark}

\section{A finite dimensional Inversion Formula}

Theorem~\ref{theCorollary} suggests that having a single inequality $(-1)^m\frac{d^m}{dt^m}(\varphi''(\sqrt{t}))\geq 0$ for some $m$ has something to do with $\mySpace{k+1}{E}$ being an $L_1$ subspace for some $k$ that depends on $m$. Under an additional smoothness assumption, this is indeed the case.

\medskip

\subsection{The cosine transform}
We begin by recalling some basic concepts and results from  convex and differential geometry.

\medskip

 In the previous section we
discussed probability distributions. In this section we are going to discuss the concept of distributions in the sense of Schwartz. In \cite{Boas}, Boas remarked that
"It has not helped communication that 'distribution' now means different things in probability and in functional analysis". The reader
is asked to interpret every occurence of the word 'distribution' that appears in this section, as a distribution in the sense of Schwartz.
\medskip

 If $\testFunctionSpace$ denotes the space of infinitely differentiable and even functions on the
sphere, then for $f\in\testFunctionSpace$ the \emph{cosine transform} $\CT f$ of $f$ is the function
\begin{equation}\label{cosineTransformDefinition}
({\mathcal T}f)(u)=\int_{\sphere{n-1}}|\innerproduct{u}{v}|f(v)\,d\lambda_{n-1}(v), \quad (u\in\sphere{n-1}),
\end{equation}
where $\lambda_{n-1}$ denotes the $(n-1)$-dimensional spherical Lebesgue measure.

\medskip

The cosine transform is continuous bijection of $C_e^{\infty}(\sphere{n-1})$ onto itself, 
and can be extended by duality to a bi-continuous bijection of the dual space $D_e(\sphere{n-1})$ of even distributions on $\sphere{n-1}$.
Thus, if $\rho\in D_e(\sphere{n-1})$ , then $\CT\rho$ and $\inverseCT\rho$ are even distributions defined via duality by:
\[ (\CT\rho)(f)=\rho(\CT f),\ \ \mbox{and}\ \ (\inverseCT\rho)(f)=\rho(\inverseCT f),\quad (f\in\testFunctionSpace). \]

\noindent For further details, see \cite{GardnerBook}, \cite{GW1992} and \cite{weil76}.

\medskip

Note that if in (\ref{cosineTransformDefinition}) the function $f$ is nonnegative, even and integrable, then the integral is just the norm of the function $f_v(u)=\innerproduct{u}{v}$ in the space $L_1(\sphere{n-1},fd\lambda_{n-1})$. In general, a normed space $(\Ren{n},\norma{\cdot})$ is
an $L_1$-subspace if and only if there exists a positive, even and finite measure $\mu$ on the sphere $\sphere{n-1}$, such that $(\CT\mu)(x)=\norma{x}$ for every $x\in\Ren{n}$.
In this case, the representation of the norm as
\begin{equation}\label{LevyRepresentation}
\norma{x}=\int_{\sphere{n-1}}|\innerproduct{x}{v}|\,d\mu(v)\qquad (x\in\Ren{n})
\end{equation}
is called the \emph{L{\'e}vy representation} of the norm. (See Lemma 6.4 of \cite{kol1992}).

The restriction of a norm in $\Ren{n}$ to the sphere belongs to $\distributionSpace$. Since the cosine transform is a bijection, there exists a unique distribution $\rho\in\distributionSpace$ such that $\norma{\cdot}=\CT\rho$. If $\rho$ is a positive distribution, then it is a measure, and we have  a L{\'e}vy representation. Hence, to determine whether $(\Ren{n},\norma{\cdot})$ is an $L_1$-subspace,  we need to determine whether the inverse cosine transform of the restriction of the norm to the sphere is a positive distribution.

To this end, a useful tool is an inversion formula, due to Goodey and Weil.
\begin{equation}\label{GWinversionFormula}
\inverseCT=\frac{1}{2\omega_{n-2}}(\Delta_n+n-1){\mathcal R}^{-1},
\end{equation}
where $\Delta_n$ denotes the Laplace-Beltrami operator on $\sphere{n-1}$, $\omega_{n-2}=\frac{2\pi^{(n-1)/2}}{\Gamma((n-1)/2)}$ and
${\mathcal R}^{-1}$ is the inverse spherical Radon transform. (\cite{GW1992}, Proposition $2.1$). In the rotationally-symmetric case, this inversion formula can be effectively applied to functions of one variable. 

\medskip

\subsection*{Rotationally symmetric distributions}

We say that a  distribution $\rho\in D_e(\sphere{n-1})$ is rotationally symmetric if for every $g\in C_e^\infty(\sphere{n-1})$ and every rotation $V$ that keeps the rotation axis  fixed,
$\rho(g\circ V)=\rho(g)$. It is not hard to verify that if $\rho$ is a rotationally symmetric distribution, then $\CT \rho$ and $\inverseCT \rho$ are also rotationally symmetric.

\medskip
For rotationally symmetric functions, and even dimensions, the inversion formula 
 (\ref{GWinversionFormula}) is significantly more tractable than for \textcolor{brown}{in?} the general case. If $f$ is a rotationally symmetric  continuous function on the sphere, then
for each $0\leq\theta\leq\pi$, it is constant on the sets $\{u\in\sphere{n-1}: u_n=\cos\theta\}$, where $u_n$ is the $n$'th coordinate of~$u$.
 We use the same symbol $f$ to denote the induced function of $\theta$. Therefore, if we write $f(\theta)$ we are referring
 to the value of $f$ at the point $\sin\theta e_{n-1}+\cos\theta e_n\in\sphere{n-1}$. Since the inverse cosine transform of $f$ is also rotationally symmetric,  it ought to be described in terms of the one-variable function $f(\theta)$.   This is indeed the case.
 
 \medskip
 
In  \cite{LonkeDegreeOf} and in the references therein, it is explained that if $n\geq 4$ is even, then the inverse cosine transform of $f$  can be calculated via the inversion formula (\ref{GWinversionFormula}) by appyling the following differential operator to the function $x^{n-3}f(\sin^{-1}x)$
\begin{equation}\label{differentialOperator}
c_n\left [(1-x^2)\frac{d^2}{dx^2}-(n-1)x\frac{d}{dx}+n-1\right]\circ x\left (\frac{1}{x}\frac{d}{dx}\right)^{\frac{n}{2}-1}.
\end{equation}
Therefore the inverse cosine transform of $f$ induces a one-dimensional  distribution on the interval $(0,1)$, which is defined by a differential operator
of degree ${\frac{n}{2}+1}$. As before, we shall use the symbol $\inverseCT f$ to also denote the one-dimensional distribution induced by the inverse cosine transform of the function~$f$. The following inversion formula recovers the inverse cosine transform
away from the pole and its orthogonal complement.
\subsection*{An inversion formula}

\bigskip

\begin{theorem}\label{myInversionTheorem}
Let $f\in C^{\infty}(\Ren{2n}\backslash\{0\})$ be a rotationally symmetric, continuous, and homogeneous of degree~$1$ real valued function. Put
\[ \varphi(t)=f(e_{2n-1}+te_{2n}),\qquad (t\in\ar). \]
Then the rotationally symmetric inverse cosine transform  $\inverseCT f$ restrticed to  the inverval $(0,1)$ is given by the formula 

\begin{equation}\label{myInversionFormula}
 \alpha_{2n}^{-1}(\inverseCT f)=\frac{(-2)^{n-1}}{x^{2n+1}}\frac{d^{n-1}}{dt^{n-1}}(\varphi''(\sqrt{t}))\Big|_{t\to (\frac{1-x^2}{x^2})}.
\end{equation}
\end{theorem}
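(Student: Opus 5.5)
The plan is to start from the differential-operator form (\ref{differentialOperator}) of the Goodey--Weil inversion formula (\ref{GWinversionFormula}), with $2n$ in place of $n$. Then rewrite everything in terms of $\varphi$, and push the operator through a change of variables that turns $x(\frac{1}{x}\frac{d}{dx})^{n-1}$ into $\frac{d^{n-1}}{dt^{n-1}}$ and the second-order factor into a second derivative.

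First express $f(\sin^{-1}x)$ through $\varphi$. The point with angle $\theta$ is $\sin\theta\, e_{2n-1}+\cos\theta\, e_{2n}$. Homogeneity of degree $1$ gives $f(\theta)=\sin\theta\,\varphi(\cot\theta)$, so with $x=\sin\theta\in(0,1)$,
\[ x^{2n-3}f(\sin^{-1}x)=x^{2n-2}\,\varphi\Big(\frac{\sqrt{1-x^2}}{x}\Big). \]
It is natural to set $s=\frac{1-x^2}{x^2}$, so that $x^{-2}=1+s$ and $\frac{1}{x}\frac{d}{dx}=-\frac{2}{x^4}\frac{d}{ds}=-2(1+s)^2\frac{d}{ds}$. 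The function to be differentiated then becomes $g(s)=(1+s)^{-(n-1)}\varphi(\sqrt{s})$.

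The key step is the commutation identity
\[ \Big((1+s)^2\frac{d}{ds}\Big)^{m}\big[(1+s)^{-m}h(s)\big]=(1+s)^{m}h^{(m)}(s). \]
I would prove it by substituting $w=\frac{1}{1+s}$: then $(1+s)^2\frac{d}{ds}=-\frac{d}{dw}$, and $(1+s)^{-m}h(s)=w^m h(\frac1w-1)$. This is a classical identity of the type $\frac{d^m}{dw^m}[w^{m-1}F(1/w)]=(-1)^m w^{-m-1}F^{(m)}(1/w)$, and I expect an exact version for exponent $m$ holds after adjusting powers; I would verify it by induction on $m$. Applying it with $m=n-1$ and $h(s)=\varphi(\sqrt s)$ gives $x(\frac1x\frac{d}{dx})^{n-1}[x^{2n-3}f]$ as an explicit power of $x$ times $\frac{d^{n-1}}{ds^{n-1}}\varphi(\sqrt s)$, with constant $(-2)^{n-1}$.

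Next apply the second-order operator $(1-x^2)\frac{d^2}{dx^2}-(2n-1)x\frac{d}{dx}+2n-1$ to this expression, of the form $x^{a}\,G(s(x))$ with $G=(\varphi(\sqrt{\cdot}))^{(n-1)}$. The main obstacle is showing that the result collapses to $x^{-2n-1}\frac{d^{n-1}}{ds^{n-1}}(\varphi''(\sqrt s))$. That would require
\[ \frac{d^{n-1}}{ds^{n-1}}\big(\varphi''(\sqrt s)\big)=\frac{d^{n-1}}{ds^{n-1}}\Big(4s\,\frac{d^2}{ds^2}\varphi(\sqrt s)+2\frac{d}{ds}\varphi(\sqrt s)\Big), \]
since $\varphi''(\sqrt s)=4sH''+2H'$ with $H(s)=\varphi(\sqrt s)$. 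By Leibniz this equals $4sH^{(n+1)}+(4n-2)H^{(n)}$, so the right side only involves $G,G',G''$.

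I would compute the chain rule with $ds/dx=-2x^{-3}$ and $d^2s/dx^2=6x^{-4}$, and use $1-x^2=sx^2$. Then I would choose the power $a$ (fixed by the previous step) and check that the $G$-coefficient vanishes because the zeroth-order term $2n-1$ cancels it. The coefficients of $G''$ and $G'$ should come out in the ratio $4s:(4n-2)$. This bookkeeping is where errors are likely, so I would first check the case $n=2$ against the explicit formula. Finally, absorbing $c_{2n}$ into $\alpha_{2n}$ gives (\ref{myInversionFormula}) on $(0,1)$. Smoothness of $f$ away from $0$ justifies all the pointwise differentiations for $x\in(0,1)$, i.e.\ away from the pole and the equator.
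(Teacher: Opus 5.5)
Your reduction to $g(s)=(1+s)^{-(n-1)}\varphi(\sqrt s)$ and the formula $\frac1x\frac{d}{dx}=-2(1+s)^2\frac{d}{ds}$ are correct. The gap is the commutation identity, which is false as stated. Already for $m=1$ one has $(1+s)^2\frac{d}{ds}\bigl[(1+s)^{-1}h\bigr]=(1+s)h'-h$, not $(1+s)h'$. In the variable $w=\frac{1}{1+s}$, the classical identity $\frac{d^m}{dw^m}\bigl[w^{m-1}F(1/w)\bigr]=(-1)^m w^{-m-1}F^{(m)}(1/w)$ needs the power $w^{m-1}$. Your function $(1+s)^{-(n-1)}\varphi(\sqrt s)=w^{n-1}\varphi(\sqrt{1/w-1})$ carries $w^{m}$ with $m=n-1$, there is no exact version for that exponent, and the induction you propose would fail. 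Consequently $x\bigl(\frac1x\frac{d}{dx}\bigr)^{n-1}\bigl[x^{2n-3}f(\sin^{-1}x)\bigr]$ is not a power of $x$ times $G=\frac{d^{n-1}}{ds^{n-1}}\varphi(\sqrt s)$. For $n=2$, with $H(s)=\varphi(\sqrt s)$, it equals $-2x\bigl[(1+s)H'-H\bigr]$. The second half of your plan then fails for every choice of the power $a$. For $y=x^aG(s)$, the coefficient of $G$ in $(1-x^2)y''-(2n-1)xy'+(2n-1)y$ is $x^a(a-1)(as-2n+1)$, which vanishes identically only for $a=1$. With $a=1$, the coefficients of $G''$ and $G'$ are in the ratio $4s(1+s):(2s+4n-2)$ rather than $4s:(4n-2)$, which fails for every $n\geq 2$. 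Your proposed $n=2$ check would have exposed this. This unevaluated intermediate is exactly the paper's $H_n^{\varphi}$. The paper never puts it in closed form: it applies $F_{2n}$ to $xH_n^{\varphi}(x^2)$, verifies the resulting identity on the monomials $\varphi(x)=x^m$, and concludes by density.

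Your direct route can be repaired by using the correct exponent. Write $(1+s)^{-(n-1)}\varphi(\sqrt s)=(1+s)^{-(n-2)}\psi$ with $\psi(s)=\varphi(\sqrt s)/(1+s)$. Then use $\bigl((1+s)^2\frac{d}{ds}\bigr)^m\bigl[(1+s)^{-(m-1)}\psi\bigr]=(1+s)^{m+1}\psi^{(m)}$, which is the classical identity above. This gives the intermediate $(-2)^{n-1}x^{1-2n}K(s)$ with $K=\psi^{(n-1)}$. The second-order operator then produces $x^{-2n-1}\bigl[4s(1+s)K''+((8n+2)s+4n-2)K'+2n(2n-1)K\bigr]$, so the zeroth-order coefficient does not cancel, contrary to your expectation. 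Since $\varphi''(\sqrt s)=4s(1+s)\psi''+(10s+2)\psi'+2\psi$, Leibniz's rule shows the bracket is exactly $\frac{d^{n-1}}{ds^{n-1}}\varphi''(\sqrt s)$, which gives (\ref{myInversionFormula}). Written this way, the argument would be shorter than the paper's monomial check and would need no density step. However, it rests on the identity with exponent $m-1$ and on matching the nonzero $K$-term, and neither is in your proposal.
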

\begin{proof} 

For every $k$-times differentiable function $f$ defined in $\ar^+$,
\begin{equation}\label{keyObservation}
\frac{1}{2^k}\left(\frac{1}{x}\frac{d}{dx}\right)^k(f)=\frac{d^k}{dt^k}(f{\smallcirc}\sqrt{t})\Big|_{t\to x^2},
\end{equation}
as can easily be verified by induction on $k$.

\medskip

Since $f$ is homogeneous of degree $1$,  we have 
\begin{equation}\label{homogeneity}
 x^{2n-3}f(\sin^{-1}x)=x^{2n-2}\varphi(\frac{\sqrt{1-x^2}}{x}),\qquad (0<x <1).
 \end{equation}
For $0<t<1$, put  $h(t)=(\frac{1-t}{t})^{1/2}$. In view of (\ref{keyObservation}) and (\ref{homogeneity}), we can write
\begin{equation}\label{twoDiffops}
\frac{1}{2^{n-1}}\inverseRadonPart{x}{n-1}(x^{2n-3}f(\sin^{-1} x)) = \frac{d^{n-1}}{dt^{n-1}}(t^{n-1}\varphi{\smallcirc} h(t))\Big|_{t\to x^2}.
\end{equation}
For $m\in\mathbb{N}$, define a differential operator on $C^{\infty}(0,\infty)$ by
\[ F_m(g)=(1-x^2)\frac{d^2g}{dx^2}-(m-1)x\frac{dg}{dx}+(m-1)g,\]
and for every $m\in\mathbb{N}$ and $g\in C^{\infty}(0,\infty)$, define a function of $0<t<1$ by
\[ H_m^g(t)=\frac{d^{n-1}}{dt^{n-1}}(t^{n-1}g{\smallcirc} h). \]
By (\ref{differentialOperator}) and (\ref{twoDiffops}),  for $0<x<1$ the formula (\ref{myInversionFormula})  is 
equivalent to 

\begin{equation}\label{eq1}F_{2n}(x H_n^{\varphi}(x^2)))=
\frac{(-1)^{n-1}}{x^{2n+1}}
\frac{d^{n-1}}{dt^{n-1}}
(\varphi''(\sqrt{t}))\Big|_{t\to h^2(x^2)}
\end{equation}

\medskip

An application of the differential operator $F_{2n}$ to the expression $xH_n^{\varphi}(x^2)$ is straightforward:
\begin{equation}\label{eq2}
F_{2n}(xH_n^\varphi(x^2))=(H_n^\varphi)'(x^2)(6x-4x^3(n+1))+4x^3(1-x^2)(H_n^\varphi)''(x^2).
\end{equation}

It remains to show that the right hand sides of (\ref{eq1}) and (\ref{eq2}) are equal. If~${n=1}$, a routine computation verifies the equality.

\medskip

For every $0<a<b<1$, the right hand sides of  (\ref{eq1}) and (\ref{eq2}) are  both linear in~$\varphi$ and continuous in 
$C^{\infty}[a,b]$. Since the set of polynomials is dense in $C^{\infty}[a,b]$ with the topology of uniform convergence of all derivatives, it suffices to check that (\ref{eq1}) and (\ref{eq2})  agree on all monomials $\varphi(x)=x^m$, where $m\in\mathbb{N}$. Consider two cases:
\begin{enumerate}
\item[1.] $m=2l$ is an even integer and $l<n$. Hence $\varphi''(\sqrt{x})=2l(2l-1)x^{l-1}$. Since $l-1<n-1$, the right hand side of (\ref{eq1}) vanishes. On the other hand,
\[ x^{n-1}\varphi(h(x))=x^{n-l-1}(1-x)^l \]
is a polynomial of degree $n-1$, so with $\varphi=x^{2l}$, the function $H_n^\varphi$ is  constant, and the right hand side of (\ref{eq2}) vanishes as well.
\item[2.] In the second case,  $m$ is either an even integer at least $2n$, or else an odd natural number. With $\varphi=x^m$, expand the right hand side of 
\[ H_n^\varphi(x)=\frac{d^{n-1}}{dx^{n-1}}(x^{n-1-m/2}(1-x)^{m/2}), \]
to obtain
\[ (-1)^{n-1}\sum_{k=0}^{n-1}{n-1\choose k}(-1)^k(\frac{1-x}{x})^{m/2-n+k+1}(n-1-m/2)^{(k)}(m/2)^{(n-k+1)},\]
where  $(a)^q=a(a-1)\dots (a-q+1)$ for real $a$ and $q\in\mathbb{N}$. 
A substitution of this sum for $H_n^\varphi(x)$ into the right hand side of (\ref{eq2}) results in a complicated but elementary calculation that yields the expression 
\begin{equation}\label{final}
\frac{(-1)^{n-1}}{x^{m+1}}m(m-1)(1-x^2)^{m/2-n}(m/2-1)^{(n-1)}, 
\end{equation}
which for  $\varphi(x)=x^m$, is precisely the right hand side of (\ref{eq1}). 
\end{enumerate}
This holds for every compact interval $[a,b]\subset (0,1)$ and therfore holds for every ${\varphi\in C^{\infty}(0,1)}$.
\end{proof}

\medskip

\subsubsection*{Comment}

 Here is an example that demonstrates how even very "gentle" smooth perturbations of the Euclidean two dimensional ball may yield norms for which the infinite dimensional space $\mySpace{}{E}$ is not an $L_1$ subspace. Compare this to the finite dimensional situation, where in \cite{Schneider} Schneider showed that smooth perturbations of the Euclidean ball, with respect to a certain metric that depends on a finite number of high-order derivatives of the norm, can produce zonoids whose polars are zonoids.  In the present context, complete monotonicity, as well as the inverse Laplace transform,  both depend on infinitely many high-order derivatives, which renders the property of $\mySpace{}{E}$ being an $L_1$-subspace unstable with respect to arbitrarily small smooth perturbations.

To construct such perturbations, fix two real numbers $\alpha$ and $\beta$, and let~$g(\theta)$ be a solution of the differential equation 
\begin{equation*}\label{eq:diffeqLaplace}
(g+g'')(\theta)=\alpha\cos^2\theta+\beta\sin^2\theta
\end{equation*}
  in $[0,2\pi]$, such that $g$ has period $\pi$. Choose $\eps>0$ sufficiently small, so that
 $1+\eps g(\theta)$ is a restriction of a norm to the unit circle. That is, there is some norm $\norma{\cdot}$ such that 
 $$\nu(\theta):=\norma{\cos\theta e_1+\sin\theta e_2}=1+\eps g(\theta).$$
Next, consider the function we have met before:
\[ \phi(t)=\norma{e_1+te_2}=\sqrt{1+t^2}(1+\eps g(\arctan t)). \]
Thus
\[ \phi''(t)=\frac{1+\eps(g+g'')(\arctan t)}{(1+t^2)^{3/2}}=\frac{1}{(1+t^2)^{3/2}}+\frac{\eps(\alpha+t^2\beta)}{(1+t^2)^{5/2}}.\]
Hence, the inverse Laplace transform of $\phi^{''}(\sqrt{t})$ is:
$$\frac{2e^{-t}\sqrt{t}}{3\sqrt{\pi}}(3(1+\beta\eps)+2t\eps (\alpha-\beta))$$
and this is positive for $\alpha\geq \beta\geq -1/\eps$, but eventually becomes negative if $\alpha<\beta$.

\section{The One Gaussian Norm}

The simplest positive measure that can occur on the r.h.s of (\ref{fundEq}) is a unit mass measure concentrated at some nonzero point, e.g., $e^{-t^2}dt$.  Therefore, 
consider the  function $\phi(t)$ satisfying the equation $\phi''(t)=2e^{-t^2}$, so that the two-dimensional corresponding norm is
\begin{tcolorbox}

\begin{equation}\label{oneGaussian}
\gamma_1(x,y)=\int_{-\infty}^{\infty}|xt-y|e^{-t^2}\,dt=|x|e^{-y^2/x^2}+|y|\Phi(|y/x|),
\end{equation}
where
\[ \Phi(t)=2\int_{0}^te^{-s^2}\,ds \]
\end{tcolorbox}
Henceforth the norm $\gamma_1$ will be referred to as the \emph{One Gaussian Norm}.
The proof of the next Lemma makes full use of the integral representation (\ref{oneGaussian}) of the One Gaussian Norm.
\begin{tcolorbox}
\begin{lemma}\label{strictlyConvexAndSmooth}
The unit ball $B_1$ of the One Gaussian Norm is sign-symmetric, smooth and strictly convex.
\end{lemma}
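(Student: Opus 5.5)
We work directly with the integral representation (\ref{oneGaussian}), $\gamma_1(x,y)=\int_{\ar}|xt-y|e^{-t^2}\,dt$, and treat it as the $L_1(\ar,e^{-t^2}dt)$-norm of the function $t\mapsto xt-y$. This shows at once that $\gamma_1$ is a seminorm. It is a norm because $xt-y$ vanishes on a set of positive measure only when $x=y=0$.

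\emph{Sign-symmetry.} Substitute $t\to -t$ in the integral. Since the weight $e^{-t^2}$ is even, $\gamma_1(-x,y)=\gamma_1(x,y)$. Trivially $\gamma_1(-x,-y)=\gamma_1(x,y)$, and together these give the invariance under both sign changes.

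\emph{Strict convexity.} Suppose $\gamma_1(p+q)=\gamma_1(p)+\gamma_1(q)$ for $p=(x_1,y_1)$ and $q=(x_2,y_2)$, both nonzero. Then equality holds in $|a(t)+b(t)|\le|a(t)|+|b(t)|$ for almost every $t$, where $a(t)=x_1t-y_1$ and $b(t)=x_2t-y_2$. This forces $a(t)b(t)\ge 0$ a.e. The key observation is that two affine functions that do not change sign relative to each other must have the same zero, and the zero must be simple. Concretely, if one of them, say $a$, is nonconstant, it changes sign at its root $t_0$. Then $b$ must also change sign at $t_0$ with the same orientation, unless $b\equiv 0$; the latter is excluded since $q\neq 0$. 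Hence $b=\lambda a$ with $\lambda>0$. If both are constants, they have the same sign, and again $q=\lambda p$ with $\lambda>0$. So equality in the triangle inequality forces positive proportionality, and the unit ball is strictly convex.

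\emph{Smoothness.} We show $\gamma_1$ is differentiable at every nonzero $(x,y)$ by differentiating under the integral sign. The integrand $|xt-y|e^{-t^2}$ is differentiable in $(x,y)$ except on the set $\{t: xt=y\}$. For $(x,y)\neq 0$ this set has at most one point, hence Lebesgue measure zero. The partial derivatives are $\sgn(xt-y)\,t\,e^{-t^2}$ and $-\sgn(xt-y)e^{-t^2}$, which are dominated by $(1+|t|)e^{-t^2}\in L_1$. Dominated convergence applied to difference quotients, using $\bigl||a+h|-|a|\bigr|\le|h|$, gives
\[
\nabla\gamma_1(x,y)=\Bigl(\int_{\ar}\sgn(xt-y)\,t\,e^{-t^2}dt,\;-\int_{\ar}\sgn(xt-y)e^{-t^2}dt\Bigr).
\]
This gradient is continuous in $(x,y)\neq 0$, again by dominated convergence, because $\sgn(xt-y)$ converges pointwise off a null set. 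A norm that is differentiable away from the origin has a unique supporting line at each boundary point, so $B_1$ is smooth. As a cross-check, one may also differentiate the closed form in (\ref{oneGaussian}), but the integral argument avoids treating $x=0$ separately.

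\emph{Expected main obstacle.} The delicate point is the step ``$a(t)b(t)\ge 0$ a.e.\ implies proportionality'', in particular the degenerate cases where $x_1=0$ or $x_2=0$. These cases are handled by the fact that the Gaussian weight has full support on $\ar$, so any sign change of the affine function $a$ or $b$ is seen by the measure.
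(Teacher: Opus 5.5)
Your proof is correct and follows essentially the same route as the paper. Sign-symmetry is read off from the integral representation, strict convexity comes from the equality case of the triangle inequality in $L_1(e^{-t^2}\,dt)$ forcing the affine functions $x_it-y_i$ to share a sign and hence be positively proportional, and smoothness comes from continuity of the gradient of $\gamma_1$ away from the origin. The one minor difference is that you get the gradient by differentiating under the integral sign via dominated convergence, which treats points with $x=0$ uniformly; the paper instead differentiates the closed form $|x|e^{-y^2/x^2}+|y|\Phi(|y/x|)$ and handles $x=0$ by a separate limit.
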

\end{tcolorbox}
\begin{proof}
That $B_1$ is sign-symmetric follows at once from~(\ref{oneGaussian}).  Smoothness will follow from a direct computation of the derivative of $\gamma_1$ in subsection 3.1 below. To prove that $B_1$ is strictly convex, i.e., that the boundary $\partial B_1$ contains no intervals, pick any two  points $(x_1,y_1),(x_2,y_2)$ in $\partial B_1$. 
If $x_1=0$, then $|y|=\pi^{-1/2}$.
Consider the functions $f_1(t)=x_1t-y_1$, $f_2(t)=x_2t-y_2$ defined in $\ar$. We have 
\[ 
\begin{aligned}
\gamma_1(\frac{(x_1,y_1)+(x_2,y_2)}{2})=1&\Longleftrightarrow \int_\ar |f_1(t)+f_2(t)|e^{-t^2}\,dt\\
&=\int_\ar |f_1(t)|e^{-t^2}\,dt+\int_\ar |f_2(t)|e^{-t^2}\,dt 
\end{aligned}
\]
Equality in the triangle inequality in $L_1(e^{-t^2}\,dt)$ for the continuous functions $f_1,f_2$, implies that they must have the same sign everywhere in $\ar$:
\[ (x_1t-y_1)(x_2t-y_2)\geq 0\qquad (\forall t\in\ar) \]
If $x_1=0$, then $|y_1|=\pi^{-1/2}$, which implies that $x_2t-y_2$ has a constant sign in $\ar$, which is possible only if $x_2=0$, whence $|y_2|=\pi^{-1/2}$. Consequently,
the two points are equal or antipodal. They can't be antipodal because their average was supposed to have norm $1$, so they must be equal. Thus we may assume that both $x_1,x_2$ are nonzero. Examining the  quadratic above we deduce that $x_1x_2\geq 0$, $y_1y_2\geq 0$, and  $\frac{y_1}{x_1}=\frac{y_2}{x_2}$, which implies that $(x_1,y_1)$ and $(x_2,y_2)$ are proportional. Since both have norm one, and they are not antipodal, they must be equal. This proves that there are no intervals in the boundary of $B_1$.
\end{proof}
\subsection{Calculation of the One-Gaussian support function}
As before, 
\[ \Phi(t)=2\int_{0}^te^{-s^2}\,ds\]
\begin{tcolorbox}
\begin{lemma}\label{oneGaussianSupportFunction}
With $z\in\ar$,let  $G(z)=e^{z^2}\Phi(z)$, and $G^{-1}$ its inverse function. The support function of the One-Gaussian is given by
\begin{equation}\label{eq:oneGaussianSupportFunction1}
h(\cos\theta,\sin\theta)=\cos\theta\exp(G^{-1}(\tan\theta)^2),\qquad (-\frac{\pi}{2}<\theta<\frac{\pi}{2})
\end{equation}
\[ h(0,1)=\lim_{\theta\uparrow\frac{\pi}{2}}h(\cos\theta,\sin\theta)=\frac{1}{\sqrt{\pi}}\]
and
\[ h(0,-1)=\lim_{\theta\downarrow -\frac{\pi}{2}}h(\cos\theta,\sin\theta)=\frac{1}{\sqrt{\pi}}\]
\end{lemma}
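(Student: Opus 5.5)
The plan is to compute the support function $h(u)=\sup\{\innerproduct{u}{p}:\gamma_1(p)\leq 1\}$ through the Gauss map of $\partial B_1$. By Lemma~\ref{strictlyConvexAndSmooth}, $B_1$ is smooth and strictly convex. So for a boundary point $p$ the outer unit normal is $\nabla\gamma_1(p)/|\nabla\gamma_1(p)|$, and the supremum in direction $u=\nabla\gamma_1(p)/|\nabla\gamma_1(p)|$ is attained exactly at $p$. Since $\gamma_1$ is homogeneous of degree $1$, Euler's identity gives $\innerproduct{p}{\nabla\gamma_1(p)}=\gamma_1(p)=1$. Hence
\[ h\Big(\frac{\nabla\gamma_1(p)}{|\nabla\gamma_1(p)|}\Big)=\frac{1}{|\nabla\gamma_1(p)|}. \]
It then suffices to compute $\nabla\gamma_1$ explicitly and to invert the Gauss map.

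First I differentiate under the integral sign in (\ref{oneGaussian}), for $x>0$, writing $z=y/x$. This is justified away from the null set $t=z$, with dominated convergence. It gives
\[ \partial_x\gamma_1=\int_\ar t\,\sgn(t-z)e^{-t^2}\,dt=e^{-z^2},\qquad \partial_y\gamma_1=-\int_\ar \sgn(t-z)e^{-t^2}\,dt=\Phi(z). \]
The second identity uses $\int_z^\infty e^{-t^2}\,dt-\int_{-\infty}^z e^{-t^2}\,dt=-2\int_0^z e^{-t^2}\,dt$. The gradient is continuous, which also settles the smoothness deferred in Lemma~\ref{strictlyConvexAndSmooth}.

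The gradient depends only on $z$, and its angle $\theta\in(-\frac{\pi}{2},\frac{\pi}{2})$ satisfies $\tan\theta=\Phi(z)e^{z^2}=G(z)$. Next I check that $G$ is an increasing odd bijection of $\ar$. This follows from $G'(z)=2zG(z)+2\geq 2$, because $zG(z)\geq 0$. Consequently $z=G^{-1}(\tan\theta)$, and every direction with positive first coordinate is realized by a boundary point with $x>0$.

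Then $\cos\theta=e^{-z^2}/|\nabla\gamma_1|$, so $1/|\nabla\gamma_1|=\cos\theta\, e^{z^2}$, which is (\ref{eq:oneGaussianSupportFunction1}).

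For the values at $(0,\pm1)$, I note that $h$ is continuous, being a norm (the dual norm). So $h(0,1)=\lim_{\theta\uparrow\pi/2}h(\cos\theta,\sin\theta)$. As $\theta\uparrow\frac{\pi}{2}$ we have $z\to\infty$, and
\[ \frac{1}{|\nabla\gamma_1|}=\big(e^{-2z^2}+\Phi(z)^2\big)^{-1/2}\to\frac{1}{\Phi(\infty)}=\frac{1}{\sqrt{\pi}}. \]
The case $\theta\downarrow-\frac{\pi}{2}$ is symmetric, using $\Phi(-\infty)=-\sqrt{\pi}$. Directions with negative first coordinate follow from the sign-symmetry of $B_1$, and are not needed for the stated range.

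The main obstacle is mild: justifying that the Gauss map covers all of $(-\frac{\pi}{2},\frac{\pi}{2})$, which is the bijectivity of $G$, and that the maximizer in each direction is the boundary point with that normal. Both rest on the strict convexity and smoothness of Lemma~\ref{strictlyConvexAndSmooth}.
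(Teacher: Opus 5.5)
Your proof is correct and follows essentially the paper's route: compute $\nabla\gamma_1=(e^{-z^2},\Phi(z))$ for $x>0$, then invert the Gauss map through $\tan\theta=G(z)$ to read off $h$. The one difference is that you take $h(u_\theta)=1/|\nabla\gamma_1(p)|$ directly from Euler's identity, which is exactly what the cited formula $\nabla\gamma_1(\rho(\theta))=u_\theta/h(u_\theta)$ encodes. The paper instead computes $\rho(\theta)$ explicitly and simplifies $\langle u_\theta,\rho(\theta)\rangle$ with an algebraic identity. Your shortcut also gives the limits at $\theta\to\pm\frac{\pi}{2}$ explicitly through $(e^{-2z^2}+\Phi(z)^2)^{-1/2}$, and you check that $G$ is a bijection, which the paper leaves implicit.
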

\end{tcolorbox}
\begin{proof}
Differentiating (\ref{oneGaussian}) yields 
\begin{equation}\label{oneGaussianNormGradient}
\nabla\gamma_1(x,y)=
\left\{
\begin{aligned}
&(\sgn(x)e^{-y^2/x^2},\sgn(y)\Phi(|y|/|x|))& x\neq 0\\
&\sqrt{\pi}(0,\sgn(y)) & x=0
\end{aligned}
\right.
\end{equation}
and
\begin{equation}\label{gradientAtEndPoints}
\nabla \gamma_1(0,\pm 1)=\lim_{x\to 0}\nabla\gamma_1(x,\pm 1)=(0,\pm \sqrt{\pi})
\end{equation}
Thus $\gamma_1$ is continuously differentiable in $\Ren{2}$ , which provides a  proof to the fact that $B_1$ is smooth. Being also strictly convex, the \emph{reverse spherical map} is well defined. This is the map that sends each unit vector $u_\theta=(\cos\theta,\sin\theta)$ to the unique boundary point ${\rho(\theta)\in \partial B_1}$ at which $u_\theta$ is an outer normal. By (\cite{SchneiderBook}, (1.39) p. 53), we have
\begin{equation}\label{SchneidersFormula}
\nabla\gamma_1(\rho(\theta))=\frac{u_\theta }{h(u_\theta)}
\end{equation}

The derivative $\nabla\gamma_1$ is homogeneous of degree zero, so it suffices to examine it on unit vectors.
Given a unit vector $u_\alpha=(\cos\alpha,\sin\alpha)$, we have by (\ref{oneGaussianNormGradient})
\[\nabla\gamma_1(u_\alpha)=(\sgn(\cos\alpha)e^{-\tan^2\alpha},\sgn(\sin\alpha)\Phi(|\tan\alpha|)),\qquad (\frac{\pi}{2}<\alpha<\frac{\pi}{2}) \]
and for $\alpha=\pm\frac{\pi}{2}$ the values are given by (\ref{gradientAtEndPoints}). 
Given $\theta$,  (\ref{SchneidersFormula}) implies that the vector $\nabla\gamma_1(u_\alpha)$ is proportional to $u_\theta$, if and only if 
the vector $u_\alpha$ is proportional to $\rho(\theta)$.
The vector $\nabla\gamma_1(u_\alpha)$ is proportional to $u_\theta$ if and only if 
\[\tan\theta=\sgn({\tan\alpha})\Phi(|\tan\alpha)|)e^{\tan^2\alpha}=\Phi(\tan\alpha)e^{\tan^2\alpha},\ \ (-\frac{\pi}{2}< \alpha <\frac{\pi}{2})\]
(we used here the fact that $\Phi(|x|)=\sgn(x)\Phi(x)$ for every $x\in\mathbb{R}$). 
 Then, $\tan\theta=G(\tan\alpha)$, so that $\tan\alpha=G^{-1}(\tan\theta)$, and 
\[ \cos\alpha=\frac{1}{\sqrt{1+(G^{-1}(\tan\theta))^2}},\quad\sin\alpha=\frac{G^{-1}(\tan\theta)}{\sqrt{1+(G^{-1}(\tan\theta))^2}}\]
Therefore, the reverse-spherical map is given by:
\[
\begin{aligned}
\rho(\theta)=\frac{(\cos\alpha,\sin\alpha)}{\gamma_1(\cos\alpha,\sin\alpha)}&=\frac{(1,G^{-1}(\tan\theta))}{\gamma_1(1,G^{-1}(\tan\theta))}\\
&=\frac{(1,G^{-1}(\tan\theta))}{\exp(-G^{-1}(\tan\theta)^2)+|G^{-1}(\tan\theta)|\Phi(|G^{-1}(\tan\theta)|)}\\
&=\frac{(1,G^{-1}(\tan\theta))}{\exp(-G^{-1}(\tan\theta)^2)+G^{-1}(\tan\theta)\Phi(G^{-1}(\tan\theta))}
\end{aligned}
\]
Here we used the fact that $\Phi(z)$ is an odd function. Now, once we have the reverse spherical map, we can compute the support function:
\[ h(u_\theta)=\innerproduct{u_\theta}{\rho(\theta)},\quad (u_\theta\in\mathbb{S}^1)
\]
This gives
\[
\begin{aligned}
h(\cos\theta,\sin\theta)&=\frac{\cos\theta+\sin\theta\cdot G^{-1}(\tan\theta)}{\exp(-G^{-1}(\tan\theta)^2)+G^{-1}(\tan\theta)\Phi(G^{-1}(\tan\theta))},\ \ (-\frac{\pi}{2} < \theta < \frac{\pi}{2})\\
\\
&=\cos\theta\exp(G^{-1}(\tan\theta)^2)
\end{aligned}
\]
The last equality follows from the identity
\[ \frac{1+xG^{-1}(x)}{\exp(-G^{-1}(x)^2)+G^{-1}(x)\Phi(G^{-1}(x))}=\exp(G^{-1}(x)^2)\quad (x\in\ar) \]
Since $\lim_{\theta\uparrow\pi/2}G^{-1}(\tan\theta)=\infty$, and  $\lim_{\theta\downarrow -\pi/2}G^{-1}(\tan\theta)=-\infty$,we have
\[ h(0,1)=\lim_{\theta\uparrow\frac{\pi}{2}}h(\cos\theta,\sin\theta)=\frac{1}{\sqrt{\pi}}\]
and
\[ h(0,-1)=\lim_{\theta\downarrow -\frac{\pi}{2}}h(\cos\theta,\sin\theta)=\frac{1}{\sqrt{\pi}}\]
\end{proof}
With $E=(\Ren{2},\gamma_1)$, the function that determines whether $\mySpaceH{E^*}$ is an $L_1$ subspace has a surprisingly compact form, albeit not easily analysed.
Given $t\geq 0$, put $\theta=\arctan t$. Then $(1,t)=\sqrt{1+t^2}(\cos\theta,\sin\theta)$, hence
\begin{equation}\label{supportFunctionIdentity}
h(1,t)=\frac{1+tG^{-1}(t)}{\exp(-G^{-1}(t)^2)+G^{-1}(t)\Phi(G^{-1}(t))}=\exp(G^{-1}(t)^2)
\end{equation}
Where $G(t)$ is as in Lemma (\ref{oneGaussianSupportFunction}).  Hence, with $\phi(t)=h(1,t)$,
\begin{equation}\label{secondDerivativeSupport}
\phi''(t)=\frac{\exp(G^{-1}(t)^2)}{2(1+tG^{-1}(t))^3}.
\end{equation}

The infinite dimensional space generated
by the One Gaussian Norm is isometric to the Banach space $\mathbb{G}_1\oplus \ar\subset L_1$, where $\mathbb{G}_1$ is a Gaussian Hilbert Space embedded in $L_1$. The results of the previous sections show that  $\phi''(\sqrt{t})$ is completely monotone if and only if $(\mathbb{G}_1\oplus \ar)^*$ is an $L_1$ subspace. During the preparation of this work, various numerical results led me to conjecture that the function $\phi^{''}(\sqrt{t})$ is indeed completely monotone. {private} A recent work
by {xx} proves this conjecture. 
This  immediately provides counterexamples to both Grothendieck's question and Schneider's problem, 
because  $\mathbb{G}_1\oplus\ar$ contains a sequence of finite dimensional sections that are both zonoids and polars of zonoids,  
the distances to $\ell_2^n$ of which are all equal to the distance of the One Gaussian Norm to the two-dimensional Euclidean ball.

\printbibliography

@misc{RyaZvav,
	author = {Ryabogin, D. and Zvavitch, A.},
	howpublished = {arXiv:2609.10852},
	month = {September},
	title = {Zonoids whose polars are zonoids: the Banach-Mazur distance need not tend to one},
	year = {2026}}

@misc{KoldobskyPrivate,
	author = {Koldobsky, A.},
	month = {July},
	title = {Private Communication},
	year = {2022}}

@misc{SchneiderPrivate,
	author = {R. Schneider},
	month = {January},
	title = {Private Communication},
	year = {2024}}

@article{KoldLonke,
	author = {Koldobsky, A. and Lonke, Y.},
	journal = {Bull. London Math. Soc.},
	month = {November},
	number = {6},
	pages = {693-699},
	title = {A short proof of Schoenberg's conjecture on positive definite functions},
	volume = {31},
	year = {1999}}

@article{Boas,
	author = {R.P. Boas},
	journal = {Amer. Math. Monthly},
	month = {December},
	number = {10},
	pages = {727--731},
	title = {Can We Make Mathematics Intelligible?},
	volume = {88},
	year = {1981}}

@misc{GrundKobos,
	author = {F. Grundbacher and T. Kobos},
	howpublished = {arXiv:2407.08829v2 [math.MG]},
	month = {June},
	title = {On certain extremal Banach-Mazur distances and Ader's characterization of distance ellipsoids},
	year = {(2025)}}

@article{Bernstein1928,
	author = {S. Bernstein},
	journal = {Acta Mathematica},
	pages = {1--66},
	title = {Sur les fonctions absolument monotones},
	volume = {51},
	year = {1928}}

@article{LonkeDegreeOf,
	author = {Y. Lonke},
	journal = {Arch. Math.},
	pages = {343-349},
	title = {On the degree of generating distributions of centrally symmetric convex bodies},
	volume = {69},
	year = {1997}}

@book{GardnerBook,
	author = {R. Gardner},
	publisher = {Cambridge University Press (Encyclopedia of mathematics and its applications; v. 58)},
	title = {Geometric tomography},
	year = {2006}}

@article{BN,
	author = {F. Barthe and A. Naor},
	journal = {Discrete Comput Geom},
	pages = {215--226},
	title = {Hyperplane Projections of the Unit Ball of $\ell_p^n$},
	year = {2002}}

@book{LT,
	author = {J. Lindenstrauss and L. Tzafriri},
	publisher = {Springer},
	title = {Classical Banach Spaces},
	volume = {I},
	year = {1977}}

@book{SchneiderBook,
	author = {R. Schneider},
	edition = {Second edition},
	publisher = {Cambridge},
	title = {Convex Bodies: The Brunn-Minkowski Theory},
	year = {2014}}

@article{GW1992,
	author = {Goodey, P. and Weil, W.},
	journal = {Journal of Differential Geometry},
	pages = {675--688},
	title = {Centrally symmetric convex bodies and the spherical Radon transform},
	volume = {35},
	year = {1992}}

@article{LindExtPaper,
	author = {Lindenstrauss, J.},
	journal = {Illinois J. Math.},
	pages = {488--499},
	title = {On the extension of operators with finite-dimensional range},
	volume = {8},
	year = {1964}}

@article{Herz,
	author = {Herz, C.S.},
	journal = {Proc. Amer. Math. Soc.},
	number = {4},
	pages = {670--676},
	title = {A class of negative-definite functions},
	volume = {14},
	year = {1963}}

@article{Ferg,
	author = {Ferguson, T.},
	journal = {The Annals of Mathematical Statistics},
	number = {4},
	pages = {1256--1266},
	title = {A representation of the symmetric bivariate Cauchy distribution},
	volume = {33},
	year = {1962}}

@article{AFJS,
	author = {Arias, A. and Figiel, T. and Johnson, W.B. and Schechtman, G.},
	journal = {Trans. Amer. Math. Soc.},
	pages = {3835--3857},
	title = {Banach spaces with the $2$-Summing Property},
	volume = {347},
	year = {1995}}

@article{BDCK,
	author = {Bretagnolle, J. and Dacunha-Castelle, D. and Krivine, J.L.},
	journal = {Ann. Inst. H. Poincar\'e Probab. Statist.},
	pages = {231--259},
	title = {Lois stable et espaces $L_p$},
	volume = {2},
	year = {1966}}

@article{Bolker,
	author = {E. Bolker},
	journal = {Trans. Amer. Math. Soc.},
	pages = {323--344},
	title = {A class of convex bodies},
	volume = {145, November},
	year = {1969}}

@article{DJP,
	author = {Delbaen, F. and Jarchow, H. and Pe{\l}czy{\'n}ski, A.},
	journal = {Positivity},
	pages = {339--367},
	title = {Subspaces of $L_p$ Isometric to Subspaces of $\ell_p$},
	volume = {2},
	year = {1998}}

@article{Dor,
	author = {L. Dor},
	journal = {Israel J. Math.},
	number = {3--4},
	pages = {260--268},
	title = {Potentials and isometric embeddings in $L_1$},
	volume = {24},
	year = {1977}}

@article{Schneider,
	author = {R. Schneider},
	journal = {Proc. Amer. Math. Soc.},
	pages = {365--368},
	title = {Zonoids whose polars are zonoids},
	volume = {50},
	year = {1975}}

@article{Schoenberg,
	author = {I.J. Schoenberg},
	journal = {Annals of Mathematics},
	number = {4},
	pages = {811--841},
	title = {Metric Spaces and Completely Monotone Functions},
	volume = {39},
	year = {1938}}

@article{Hardin,
	author = {C.D. Hardin},
	journal = {Indiana Univ. Math. Journal},
	number = {3},
	pages = {449--465},
	title = {Isometries on Subspaces of $L^p$},
	volume = {30},
	year = {1981}}

@article{kol1992,
	author = {Koldobsky, A.},
	journal = {Ann. Inst. Henri Poincare},
	number = {3},
	pages = {335--353},
	title = {Generalized Levy representations of norms and isometric embeddings into $L_p$},
	volume = {28},
	year = {1992}}

@article{weil76,
	author = {Weil, W.},
	journal = {Israel J. Math.},
	pages = {352--367},
	title = {Centrally symmetric convex bodies and distributions},
	volume = {24},
	year = {1976}}
\end{document}